\newcommand{\bb}[1]{\mathbb{#1}}
\newcommand{\cl}[1]{\mathcal{#1}}
\newtheorem{theorem}{Theorem}[section]
\newtheorem{lemma}[theorem]{Lemma}
\newtheorem{prop}[theorem]{Proposition}
\newtheorem{thm}[theorem]{Theorem}
\newtheorem{rmk}[theorem]{Remark}
\newcommand{\mbbZ}{\mathbb{Z}}
\newcommand{\mbbQ}{\mathbb{Q}}
\newcommand{\mbbC}{\mathbb{C}}
\newcommand{\mbbI}{\mathbb{I}}
\newcommand{\mfS}{\mathfrak{S}}
\newcommand{\mfA}{\mathfrak{A}}
\newcommand{\mcC}{\mathcal{C}}
\newcommand{\mcD}{\mathcal{D}}
\newcommand{\Rep}{\mathrm{Rep}}
\newcommand{\Hom}{\mathrm{Hom}}
\newcommand{\Aut}{\mathrm{Aut}}
\newcommand{\br}{\mathrm{br}}
\newcommand{\trc}{\mathrm{Tr}_{\mcC}}
\newcommand{\Irr}{\mathrm{Irr}}
\newcommand{\Stab}{\mathrm{Stab}}
\renewcommand{\Vec}{\mathrm{Vec}}
\newcommand{\sVec}{\mathrm{sVec}}
\newcommand{\Gal}{\mathrm{Gal}}
\newcommand{\diag}{\mathrm{diag}}
\newcommand{\lemmaref}[1]{Lemma \ref{#1}}
\newcommand{\propref}[1]{Proposition \ref{#1}}
\newcommand{\thmref}[1]{Theorem \ref{#1}}
\newcommand{\sectionref}[1]{Section \ref{#1}}
\newcommand{\mbfD}{\mathbf{D}}
\renewcommand{\th}{\theta}
\renewcommand{\a}{\alpha}
\newcommand{\s}{\sigma}
\newcommand{\e}{\epsilon}
\renewcommand{\r}{\rho}
\newcommand{\paren}[1]{\left(#1\right)}
\newcommand{\lcb}{\left\{}
\newcommand{\rcb}{\right\}}
\newcommand{\abs}[1]{\left|#1\right|}
\newcommand{\Fib}{\mathrm{Fib}}
\newcommand{\ndiv}{\nmid}
\DeclareMathOperator{\SO}{SO}
\newcommand{\hide}[1]{}%#1}
\newcommand{\pjbcomment}[1]{\color{red}#1 -pjb\color{black}}
\newcommand{\cmocomment}[1]{\color{blue}#1 -cmo\color{black}}
\begin{document}

\title[Classification of Rank 5 Premodular Categories]{Classification of Rank 5 Premodular Categories}

\author{Paul J. Bruillard}
%\address{Pacific Northwest National Laboratory}
%\address{Richland, WA}
\email{paul.bruillard@pnnl.gov}

\author{Carlos M. Ortiz}
\address{Pacific Northwest National Laboratory, Richland, WA}
\email{carlos.ortiz@pnnl.gov}

\thanks{\textit{PNNL Information Release:} PNNL-SA-120942}
\thanks{The authors would like to thank C\'{e}sar Galindo and Zhenghan Wang for enlightening
discussions. The research described in this paper was, in part, conducted under
the Laboratory Directed Research and Development Program at PNNL, a
multi-program national laboratory operated by Battelle for the U.S. Department
of Energy.} 

\begin{abstract}
  We survey a number of classification tools developed in recent years and
  employ them to classify pseudo-unitary rank $5$ premodular categories up to Grothendieck
  equivalence.
\end{abstract}

\maketitle

% Keywords required only for MST, PB, PMB, PM, JOA, JOB?

%\noindent{\it Keywords}: premodular categories, topological quantum computing

% Uncomment for Submitted to journal title message
%\submitto{\JPA}
% Comment out if separate title page not required
%\maketitle

%%%%%%%%%%%%%%%%%%%%%%%%%%%%%%%%%%%%%%%%%%%%%%%%%%%%%%%%%%%%%%%%%%%%%%%%%%%%%%%%
% Introduction
%%%%%%%%%%%%%%%%%%%%%%%%%%%%%%%%%%%%%%%%%%%%%%%%%%%%%%%%%%%%%%%%%%%%%%%%%%%%%%%%
\section{Introduction}

Fusion categories axiomatize and generalize the theory of representation theory,
and their study encompasses not only the representation of finite groups and Lie
groups, but also Hopf algebras. In many of these situations the fusion category
associated to the underlying group/algebra enjoys extra structure, such as a
notion of commutativity (\textit{braiding}), duality (\textit{rigidity}),
non-degeneracy (\textit{modularity}), or other compatibility conditions such as
a \textit{spherical structure} \cite{BK}. The study of fusion categories has moved beyond
its roots in groups and algebras and now have more widespread use. For instance,
nondegenerate ribbon braided fusion categories, \textit{i.e.}, modular
categories, have broad uses in physics where they describe topological phases of
matter, and topological quantum computers\cite{K1,Wang}. Modular categories also have
applications in pure mathematics providing knot, link, and 3-manifold invariants
through TQFT \cite{Tu1}. More recently, ribbon braided fusion categories, \textit{i.e.},
premodular categories, have garnered increased attention. These categories are
thought to describe higher dimensional TQFT and thus have relevance in manifold
invariants and physics \cite{WW}. Furthermore, the study of premodular and modular
categories often follow a ``leap-frogging'' pattern whereby advances in the
understanding of one type of category allow for advances in another. In recent
years, researchers have found it useful to stratify fusion categories by a
numeric parameter known as their \textit{rank}. The low-rank classification of premodular
categories lag the classification of modular categories. On the premodular side
categories are understood through rank 4 \cite{O1,O2,B1}, while on the modular side they are
completely characterized through rank 5 and partially characterized through rank
11 \cite{RSW1,BGNPRW1,BR1,BNRW1}. The lack of advancement in the classification of premodular categories is
slowing the classification of modular categories. In this work we aim to utilize
a technique known as \textit{de-equivariantization} to produce modular
categories from premodular categories. This will allow us to leverage recent
advances in arithmetic properties of modular categories, \textit{e.g.} 
\cite{BNRW2}, to classify pseudo-unitary premodular categories of rank 5 up to Grothendieck
equivalence. Specifically, we will show:
\begin{thm}
	If $\mcC$ is a pseudo-unitary rank 5 premodular category, then
	\begin{itemize}
		\item $\mcC$ is symmetric and is given by $\Rep(G,z)$ where $G$ is
    $\mbbZ_5$, $D_8$, $Q_8$, $D_{14}$, $\mbbZ_5\rtimes \mbbZ_4$, $\mbbZ_7
    \rtimes \mbbZ_3$, $\mfS_4$, or $\mfA_5$, and $z$ is a central element of
    order at most 2.
		\item $\mcC$ is properly premodular and Grothendieck equivalent to:
    \begin{itemize}
      \item $\Rep(D_{14})$ with $\mcC'\cong\Rep\paren{\mbbZ_{2}}$, and $d_{i}=\paren{1,1,2,2,2}$.
      \item $\Rep(\mfS_4)$ with $\mcC'\cong\Rep\paren{\mfS_3}$,
      $d_{i}=\paren{1,1,2,3,3}$, and $T=\diag\paren{1,1,1,-1,-1}$.
      \item $\Rep(D_8)$ with $\mcC'\cong\Rep\paren{\mbbZ_2}$,
      $d_{i}=\paren{1,1,2,1,1}$, and 
      $T=\diag\paren{1,1,\th,-1,-1}$, where $\th$ is a root of unity satisfying
      a monic degree 4 polynomial over $\mbbZ$.
      \item $PSU(2)_8$, and is obtainable as a $\bb{Z}_2$-equivariantization of $Fib\boxtimes Fib$.
    \end{itemize}
		\item $\mcC$ is modular and it is Grothendieck equivalent to $SU(2)_4, SU(2)_9/\mbbZ_2, SU(5)_1$, or $SU(3)_4/\mbbZ_3$ \cite{BNRW2}.
	\end{itemize}
  Moreover, each case is realized.
\end{thm}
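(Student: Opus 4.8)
The plan is to stratify by the M\"uger center $\mcC'$, the maximal symmetric fusion subcategory of $\mcC$, and reduce each stratum to an already-settled problem via (de-)equivariantization. Since $\mcC$ is pseudo-unitary so is $\mcC'$, and Deligne's theorem identifies it as a super-Tannakian category $\Rep(G,z)$ for a finite group $G$ and central $z$ with $z^2=1$. The trichotomy $\mcC'=\mcC$ (symmetric), $\mcC'=\Vec$ (modular), and $\Vec\subsetneq\mcC'\subsetneq\mcC$ (properly premodular) organizes the argument, and the three bullet points of the theorem correspond exactly to these cases.

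For the symmetric case I would use that the rank of $\Rep(G,z)$ equals the number of conjugacy classes of $G$, independently of $z$; hence rank $5$ forces $G$ to have exactly five conjugacy classes. Bounding $\abs{G}$ from this datum via the class equation and then enumerating produces precisely $\mbbZ_5$, $D_8$, $Q_8$, $D_{14}$, $\mbbZ_5\rtimes\mbbZ_4$, $\mbbZ_7\rtimes\mbbZ_3$, $\mfS_4$, and $\mfA_5$. The super-Tannakian refinement—a genuine central $z$ of order $2$, which exists only for $D_8$ and $Q_8$—then accounts for the distinct symmetric ribbon structures, settling the first bullet.

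The heart of the proof is the properly premodular case, where $\mcC'$ is a nontrivial proper symmetric subcategory of rank $2$, $3$, or $4$. When $\mcC'=\Rep(G)$ is Tannakian I would de-equivariantize: because $\Rep(G)$ is transparent, $\mcC_G$ is an honest braided fusion category of total dimension $\dim(\mcC)/\abs{G}$, and since $\mcC'$ is the \emph{entire} M\"uger center, $\mcC_G$ is in fact modular. For the listed fusion rings this yields small, classified modular categories: a pointed $\mbbZ_7$ category of dimension $7$ from $\Rep(D_{14})$, and dimension-$4$ modular categories from $\Rep(\mfS_4)$ (via $\Rep(\mfS_3)$) and from the $\Rep(D_8)$-variant. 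Matching Frobenius--Perron dimensions, forced by pseudo-unitarity, against the candidate fusion rings pins the vector $d_i$, and reconstructing $\mcC$ as the corresponding $G$-equivariantization recovers the braiding. The exceptional entry $PSU(2)_8$ is the case in which the central $\Rep(\mbbZ_2)$ de-equivariantizes to a \emph{nontrivially} graded $\mbbZ_2$-crossed category; running this in reverse exhibits $PSU(2)_8$ as the $\mbbZ_2$-equivariantization of $\Fib\boxtimes\Fib$ under the factor-swap action.

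To rigidify the ribbon structure and discard spurious candidates I would invoke the arithmetic constraints of \cite{BNRW2}: the $\Gal$-action on the modular data, the Cauchy theorem relating primes dividing $\dim(\mcC)$ to the orders of the twists, and cyclotomic integrality of the entries of $S$ and $T$. These fix the twists up to the stated ambiguity, giving $T=\diag(1,1,1,-1,-1)$ in the $\mfS_4$ case and the degree-$4$ integral minimal polynomial for $\th$ in the $D_8$ case. The modular case is then immediate from the rank-$5$ modular classification in \cite{BNRW2}, and realizability follows by exhibiting each category explicitly: $\Rep(G,z)$ by construction, the modular ones by their coset/quantum-group models, and the properly premodular ones by the equivariantizations built above. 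The main obstacle is the properly premodular analysis itself: one must handle the super-Tannakian subcases, where a fermion in $\mcC'$ obstructs direct de-equivariantization to a modular category and must first be split off; verify that the de-equivariantization/equivariantization correspondence is faithful and exhaustive on braidings; and control the nontrivially $\mbbZ_2$-graded reconstruction producing $PSU(2)_8$.
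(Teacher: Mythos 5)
Your skeleton matches the paper's: stratify by the M\"uger center, use Deligne plus a Landau-type bound and enumeration for the symmetric case, de-equivariantize the Tannakian center in the properly premodular case, and quote \cite{BNRW2} for the modular case. But as written the argument is circular at its crucial step. You write ``for the listed fusion rings this yields small, classified modular categories'' and ``matching Frobenius--Perron dimensions against the candidate fusion rings pins the vector $d_i$'' --- yet the theorem's content is precisely that these are the \emph{only} fusion rings that occur, and you never say where the candidate list comes from or how the competitors are killed. Concretely, you must first enumerate the possible M\"uger centers at each rank ($\Rep(G)$ with $G$ having $2$, $3$, or $4$ irreducibles, so $G\in\lcb\mbbZ_2\rcb$, $\lcb\mbbZ_3,\mfS_3\rcb$, or $\lcb\mbbZ_4,\mbbZ_2^2,D_{10},\mfA_4\rcb$), and then eliminate every rank-$4$ center and the $\Rep(\mbbZ_3)$ center. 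Those eliminations are the bulk of the work and each needs its own argument: bounding $\mathrm{ord}(\th_4)$ via a quadratic over $\mbbQ(\mbfD)$ (\lemmaref{Lemma: Cyclotomic Remark}) together with $S_{4,4}=-\dim\mcC'$, then deriving contradictions from $\dim\mcC'\nmid\dim\mcC$ ($D_{10}$), from non-algebraic-integer orbit dimensions after de-equivariantization ($\mfA_4$, $\mbbZ_4$), from the nonexistence of a Klein-four action on $\Fib$ ($\mbbZ_2^2$), and from $\th_3^2+5\th_3+1=0$ having no root-of-unity solutions ($\mbbZ_3$). Similarly, in the rank-$2$-center case you must rule out $\mcC_{\mbbZ_2}\cong\Fib\boxtimes\mathrm{Sem}$ via the near-group constraint of \cite{S1}, and in the rank-$3$ case you must exclude the $y\neq z$ branch by counting orbits of $\mfS_3$-actions on a rank-$13$ pointed modular category. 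None of these steps is suggested by ``match dimensions against the candidates.''

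Two smaller points. First, your worry about super-Tannakian centers obstructing de-equivariantization is moot: since the rank is odd, any order-$2$ invertible in $\mcC'$ would act freely on $\Irr(\mcC)$ if its twist were $-1$ (\lemmaref{Lemma: Tannakian}), so the center is automatically Tannakian --- this is the observation that makes the whole strategy run, and you should state it rather than defer it as an obstacle. Second, your claim that the de-equivariantization/equivariantization dictionary ``recovers the braiding'' and pins the twists is too optimistic: the paper needs the balancing equation, column orthogonality of $S$, commutativity of the fusion matrices, and the trace identity of \cite[Corollary 3.3]{B1} to force $T=\diag(1,1,1,-1,-1)$ in the $\mfS_4$ case and the degree-$4$ integral polynomial for $\th$ in the $D_8$ case, and even then $\th$ is not fully determined (see the remark following \propref{Prop: Rank 2 Muger Center}).
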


In \sectionref{Section: Preliminaries} we will review the basic theory of premodular
categories. Having dispensed with the preliminaries we will stratify premodular
categories by the amount of degeneracy. In \sectionref{Section: Classification} we
will analyze each case in turn to arrive at \thmref{Theorem: Main Theorem}.

%%%%%%%%%%%%%%%%%%%%%%%%%%%%%%%%%%%%%%%%%%%%%%%%%%%%%%%%%%%%%%%%%%%%%%%%%%%%%%%%
% Preliminaries
%%%%%%%%%%%%%%%%%%%%%%%%%%%%%%%%%%%%%%%%%%%%%%%%%%%%%%%%%%%%%%%%%%%%%%%%%%%%%%%%
\section{Preliminaries}
\label{Section: Preliminaries}
%\cmocomment{Need to add Sonia's paper deequivariatization results and explanations}\\

%Unless stated otherwise, $\mcC$ will be a rank $5$ premodular category. 

A premodular category $\cl C$ is a braided, balanced, and fusion category. We will denote the
isomorphism classes of simple objects by $X_{a}$, indexed such that
$X_{0}=\mbbI$ is the monoidal unit. We will denote the set of such isomorphism
classes by $\Irr(\mcC)$.
The fusion matrices, $N_{a}$, the $S$-matrix, $S=(s_{x,y})$, and the $T$-matrix, $T=(\delta_{x,y}\theta_{x})$
are defined in the usual way \cite{BK}. Here $\theta_{x}$ is the twist of the
simple $x$ and is known to have finite order \cite{V}. The triple
$(\{N_{a}\},S,T)$ is known as \textbf{premodular datum}. Throughout we will
assume $\mcC$ is \textbf{pseudo-unitary} and so we may take the categorical
dimensions, $d_{a}$, are the Frobenius-Perron eigenvalues of the $N_{a}$,
\textit{i.e.}, the FP-dimensions.

Let $\cl C'$ denote the M\"{u}ger center of the category
$\cl C$ \cite{muger}. We say that $\mcC$ is \textbf{modular} if $\mcC'=\Vec$,
\textbf{symmetric} if $\mcC=\mcC'$, and \textbf{properly premodular} otherwise.
Symmetric categories are completely characterized by the following result due to
Deligne:
\begin{thm}[See \cite{D1}]
  \label{Theorem: Deligne Symmetric Classification}
	If $\cl C$ is a symmetric ribbon category, then $\cl C \cong \Rep(G,z)$ where
  $z$ is a central element of $G$ of order at most $2$.
  % In other words, the elements of order $2$ determine the spherical structure of the category.
\end{thm}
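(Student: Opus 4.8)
The plan is to prove this by super-Tannakian reconstruction, following Deligne. Recall that the target $\Rep(G,z)$ is concretely the symmetric fusion category of finite-dimensional complex $G$-representations, equipped with the symmetric braiding twisted by the super-sign read off from the central involution $z$: a simple on which $z$ acts by $+1$ is declared ``even,'' one on which $z$ acts by $-1$ is ``odd,'' and the braiding picks up a factor of $-1$ exactly on the odd-odd component. So the entire task is to exhibit $\mcC$ as the representation category of such a pair, and the whole argument rests on producing a symmetric tensor functor, i.e.\ a super-fiber functor $F\colon \mcC \to \sVec$.

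First I would record the structural features that make the reconstruction machinery available. As a fusion category $\mcC$ is rigid and semisimple with finitely many simple objects and $\Hom(\mbbI,\mbbI)=\mbbC$; being symmetric, its double braiding is trivial, so the balancing is a tensor automorphism of the identity functor whose values on simples are roots of unity (of finite order by \cite{V}). I would then check the growth hypothesis underlying Deligne's criterion: the length of $X^{\otimes n}$ grows at most exponentially in $n$. This is automatic here, since there are only finitely many simples and the multiplicities occurring in $X^{\otimes n}$ are bounded by a fixed power of $\mathrm{FPdim}(X)$.

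The heart of the matter, and the step I expect to be the genuine obstacle, is Deligne's theorem that a symmetric tensor category over a field of characteristic $0$ satisfying this subexponential-growth condition admits a super-fiber functor $F\colon \mcC \to \sVec$. I would cite \cite{D1} for this and not attempt to reprove it, as it requires Deligne's delicate analysis of how the symmetric structure permits one to split off the super-sign and assemble an honest tensor functor into $\sVec$; this is precisely the content that distinguishes the super-Tannakian setting from the classical one.

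Granting $F$, the remainder is formal. Setting $\mathbb{G}=\underline{\Aut}^{\otimes}(F)$, the affine super-group scheme of tensor automorphisms of $F$, super-Tannakian duality gives $\mcC \cong \Rep(\mathbb{G})$ as symmetric tensor categories, and the parity cocharacter $\mu_2 \to \mathbb{G}$ supplies a distinguished central element. I would then use that $\mcC$ is fusion, hence has finitely many simples and is semisimple, to force $\mathbb{G}$ to be a finite group scheme; over $\mbbC$ every finite group scheme is \'etale, so $\mathbb{G}$ is an ordinary finite group $G$, and the parity element becomes a central $z\in G$ with $z^{2}=1$. Finally, tracking the ribbon structure through the equivalence identifies the twist $\pm 1$ on each simple with the parity action of $z$, yielding $\mcC \cong \Rep(G,z)$ as claimed.
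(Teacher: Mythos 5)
The paper offers no proof of this statement---it is imported wholesale from Deligne \cite{D1}---and your sketch is a correct outline of the standard super-Tannakian argument, rightly isolating the existence of a super-fiber functor $\mcC\to\sVec$ as the step that must be taken from \cite{D1} rather than reproved. Since you, like the paper, ultimately defer the substantive content to Deligne, there is nothing to compare beyond noting that your formal Tannakian bookkeeping (moderate growth from the fusion hypothesis, a finite \'etale group scheme over $\mbbC$ hence an honest finite group $G$, the parity cocharacter giving the central involution $z$, and the twist matching the action of $z$) is accurate.
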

%\begin{proof}
%	This is a result due to Deligne \cite{D1}.
%\end{proof}
In the properly premodular setting, this result can be exploited to determine
part of the premodular datum. For instance, the categorical dimensions of
simples in $\mcC'\cong\Rep(G,z)$ follow from the representation theory of $G$. Of
course, by definition of the $S$-matrix, for $x,y$ isomorphism classes of
simples in $\mcC'$, one has $s_{x,y}=d_{x}d_{y}$. One can obtain similar
information regarding the $T$-matrix, for instance:
\begin{lemma} \label{Lemma: Tannakian}
	Suppose $\cl C$ is a symmetric category and $X \in \cl C$ is an element of
  order $2$, then $\theta_X = \pm 1$. Moreover, $\theta_X = 1$ if $X$ fixes an
  element of $\cl C$. In particular, this is true if $\mcC$ is the M\"{u}ger
  center of an odd rank premodular category.
\end{lemma}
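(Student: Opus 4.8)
The plan is to extract everything from the balancing (ribbon) axiom together with the vanishing of the double braiding, rather than invoking Deligne's classification. Recall that in any braided balanced category one has $\theta_{X\otimes Y}=(\theta_X\otimes\theta_Y)\circ c_{Y,X}\circ c_{X,Y}$, and that on a simple object the twist acts as a scalar. Note that ``order $2$'' forces $X$ to be invertible and distinct from $\mbbI$, hence simple, with $X\otimes X\cong\mbbI$.

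First I would prove $\theta_X=\pm1$. Since $\mcC$ is symmetric, $c_{X,X}\circ c_{X,X}=\mathrm{id}$, so the balancing axiom collapses to $\theta_{X\otimes X}=\theta_X^2$ as scalars. On the other hand $X\otimes X\cong\mbbI$ forces $\theta_{X\otimes X}=\theta_{\mbbI}=1$ by naturality of the twist. Hence $\theta_X^2=1$ and $\theta_X=\pm1$. For the ``moreover'' clause, suppose $X$ fixes a simple $Y$, i.e.\ $X\otimes Y\cong Y$. The only property of the symmetric structure I actually need is that the double braiding $c_{Y,X}\circ c_{X,Y}$ is trivial; this holds whenever $X$ lies in the M\"uger center, in particular whenever the whole category is symmetric. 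Under this hypothesis the balancing axiom again gives $\theta_{X\otimes Y}=\theta_X\theta_Y$, while $X\otimes Y\cong Y$ and naturality give $\theta_{X\otimes Y}=\theta_Y$. Since $\theta_Y$ is a nonzero scalar (a root of unity), cancelling yields $\theta_X=1$.

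Finally, for the ``in particular'' clause, I would supply the existence of such a fixed $Y$ by a parity count. Let $\mcD$ be the ambient odd-rank premodular category with $\mcC=\mcD'$. Tensoring by the invertible object $X$ is a permutation of $\Irr(\mcD)$, and since $X^{\otimes 2}\cong\mbbI$ this permutation is an involution, i.e.\ a $\mbbZ_2$-action whose orbits have size $1$ or $2$. Because $\abs{\Irr(\mcD)}$ is odd, the number of fixed points is odd, hence positive, so some simple $Y\in\mcD$ satisfies $X\otimes Y\cong Y$. Membership of $X$ in the M\"uger center $\mcD'$ is exactly the triviality of the double braiding $c_{Y,X}\circ c_{X,Y}$ needed above, and the twist of $X$ computed in $\mcC$ agrees with that computed in $\mcD$, so the previous paragraph applies and $\theta_X=1$.

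The computations are short; the only real subtlety is conceptual rather than technical. The crux is recognising that the ``moreover'' statement should be proved using trivial double braiding rather than full symmetry, so that in the M\"uger-center case the fixed object $Y$ may be taken in the larger category $\mcD$, where the parity count has room to produce a fixed point, even though $Y$ need not lie in $\mcC$ itself. I would also take care that, because $X\neq\mbbI$, the pair $\lcb\mbbI,X\rcb$ forms a genuine $2$-orbit and does not spuriously contribute to the fixed-point count.
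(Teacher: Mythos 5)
Your proof is correct, and it takes a genuinely different route from the paper's. The paper cites Deligne \cite{D1} for $\theta_X=\pm1$ and proves the rest by contrapositive: if $\theta_X=-1$, then $\langle\mbbI,X\rangle\cong\sVec$ lies in the M\"uger center, so by \cite[Proposition 2.6(i)]{ENO2} tensoring with $X$ moves every simple object, hence $X$ can fix nothing; the fixed-point count needed for the odd-rank case is left implicit. You instead argue directly from the balancing axiom: triviality of the double braiding gives $\theta_X^2=\theta_{X\otimes X}=\theta_{\mbbI}=1$, and on a fixed simple $Y$ it gives the cancellation $\theta_X\theta_Y=\theta_Y$, so $\theta_X=1$. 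Two features of your write-up are genuine improvements in precision. First, you isolate the hypothesis actually used --- vanishing of $c_{Y,X}\circ c_{X,Y}$, i.e.\ centrality of $X$ with respect to $Y$, rather than symmetry of the whole ambient category --- and this is exactly what legitimizes taking the fixed simple $Y$ in the odd-rank category $\mcD$ even though $Y\notin\mcC=\mcD'$ (in general no fixed simple exists inside $\mcC$ itself, which may have even rank). Second, you make the parity argument explicit, whereas the paper leaves it to the reader. What the paper's route buys is brevity and a stronger structural fact (a fermion in the center acts freely on all simples, which is what \cite{ENO2} supplies); what yours buys is a short, self-contained, citation-free argument. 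One caution common to both readings: ``element of $\mcC$'' must be interpreted as ``simple object,'' as you implicitly do by taking $Y$ simple --- otherwise the statement fails, since the fermion in $\sVec$ fixes the non-simple object $\mbbI\oplus X$.
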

\begin{proof}
	The first statement of this lemma is proven in \cite{D1}. Now if
  $\theta_X=-1$ and $X$ is an element of order $2$ we know that $\langle 1,X
  \rangle \cong \sVec$. Now by \cite[Proposition 2.6 (i)]{ENO2} we know that $X$
  moves everything in the category.
\end{proof}

In the case that $\mcC$ has odd rank, this lemma tells us that the M\"{u}ger
center of $\mcC$ is Tannakian. In particular, we can exploit the minimal
modularization of \cite{Brug}. This procedure is more generally known as
de-equivariantization. Recent work of Natal\'{e} and Bruciu allow one
to gain a great deal of insight into the structure of the minimal modularization
of $\mcC$. It is often true that the structure enforced by the
de-equivariantization procedure is at odds with the Galois theory of the
modular category. A complete discussion of de-equivariantization and the Galois
theory of modular categories is beyond the scope of this current work, but
further details can be found in \cite{Brug,BN1,BGNPRW1}. 

Next, recall that the $T$-matrix, $S$-matrix, and dimensions are related through
the \textbf{balancing equation} \cite{BK}:
\begin{equation}\label{balancing}
	\theta_{x} \theta_{y}S_{xy} = \sum_{k=0}^4 N_{x^* y}^k \theta_k d_k
\end{equation}

Furthermore, recall that the $d_{x}$ are further related through the
\textbf{dimension equation}:
\begin{equation}\label{dimension}
	d_{x}d_{y}=\trc(X\otimes Y)=\trc(\bigoplus
	N_{x,y}^{z}Z)=\sum_{z}N_{x,y}^{z}d_{z}
\end{equation}

The structure of the fusion rules, knowledge of dimensions, and twists of a
subcategory often allow one to produce a pair of polynomials from these
equations from which the order of certain twists can be bounded. In particular,
we have:
\begin{lemma}
  \label{Lemma: Cyclotomic Remark}
  If $\mbfD$ is a subset of $\lcb d_{a}\mid 1\leq a\leq r\rcb$,
  $f\in\mbbQ\paren{\mbfD}[x]$ is a degree $2$ polynomial with leading coefficient
  $a_{2}$ and constant coefficient $a_{0}$, such that $f\paren{\th}=0$ for some
  $\theta$, then 
  \begin{align*}
    [\mbbQ\paren{\th}:\mbbQ]\leq \begin{cases}
    2^{\abs{\mbfD}+1} & \text{if $a_{2}\neq 0$ and $a_{0}/a_{2}$ is a unit}\\ 
    2^{\abs{\mbfD}} & \text{if $a_{2}=0$ or $a_{0}/a_{2}$ is not a unit.}
    \end{cases}
  \end{align*}
\end{lemma}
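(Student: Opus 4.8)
The plan is to compare the cyclotomic field $\mbbQ\paren{\th}$, generated by the twist $\th$, with the extension $\mbbQ\paren{\mbfD}$ generated by the dimensions, exploiting that $\th$ has finite order and is therefore a root of unity. I would set up the tower $\mbbQ \subseteq \mbbQ\paren{\mbfD} \subseteq \mbbQ\paren{\mbfD,\th}$ and apply multiplicativity of degrees,
\begin{equation*}
  [\mbbQ\paren{\mbfD,\th}:\mbbQ] = [\mbbQ\paren{\mbfD,\th}:\mbbQ\paren{\mbfD}]\cdot[\mbbQ\paren{\mbfD}:\mbbQ],
\end{equation*}
together with the inclusion $\mbbQ\paren{\th}\subseteq\mbbQ\paren{\mbfD,\th}$, so that $[\mbbQ\paren{\th}:\mbbQ]$ divides, and in particular is bounded by, the left-hand side. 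The argument then reduces to two independent estimates: bounding $[\mbbQ\paren{\mbfD}:\mbbQ]$ by $2^{\abs{\mbfD}}$, and bounding the relative degree $[\mbbQ\paren{\mbfD,\th}:\mbbQ\paren{\mbfD}]$ by $2$, improved to $1$ in the refined cases.

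Since $f$ has degree $2$ and $f\paren{\th}=0$, the element $\th$ satisfies a quadratic over $\mbbQ\paren{\mbfD}$, so $[\mbbQ\paren{\mbfD,\th}:\mbbQ\paren{\mbfD}]\le 2$, with equality precisely when $f$ is irreducible over $\mbbQ\paren{\mbfD}$. If $a_{2}=0$ then $f$ is in fact linear and $\th\in\mbbQ\paren{\mbfD}$ outright. The heart of the refinement is the case $a_{2}\neq 0$: writing the two roots of $f$ as $\th,\th'$, Vieta's formula gives $\th\th'=a_{0}/a_{2}$. If $f$ were irreducible over $\mbbQ\paren{\mbfD}$, then $\th'$ would be the image of $\th$ under a field embedding fixing $\mbbQ$; since any such embedding sends a root of unity to a root of unity, $\th'$ is again a root of unity, hence an algebraic-integer unit. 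Then $a_{0}/a_{2}=\th\th'$ is a product of units lying in $\mbbQ\paren{\mbfD}$, hence a unit. Contrapositively, if $a_{0}/a_{2}$ is not a unit then $f$ must be reducible over $\mbbQ\paren{\mbfD}$, and a reducible quadratic splits into linear factors over its base field, forcing $\th\in\mbbQ\paren{\mbfD}$. Thus in both refined cases ($a_{2}=0$ or $a_{0}/a_{2}$ not a unit) we obtain $\mbbQ\paren{\th}\subseteq\mbbQ\paren{\mbfD}$ and the bound $2^{\abs{\mbfD}}$, whereas otherwise we pay the extra factor of $2$ and obtain $2^{\abs{\mbfD}+1}$.

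It remains to bound $[\mbbQ\paren{\mbfD}:\mbbQ]$ by $2^{\abs{\mbfD}}$. I would deduce this from the fact that each categorical dimension $d_{a}$ of the categories at hand generates an extension of $\mbbQ$ of degree at most $2$; the compositum of $\abs{\mbfD}$ such extensions then has degree at most the product of the individual degrees, namely $2^{\abs{\mbfD}}$. This per-dimension quadratic bound is the step I expect to be the main obstacle, since it fails for premodular categories of arbitrary rank and so must be extracted from the constraints specific to rank $5$: the rationality of the M\"uger-center dimensions supplied by \thmref{Theorem: Deligne Symmetric Classification}, combined with the dimension equation \eqref{dimension} and the balancing equation \eqref{balancing} constraining the remaining dimensions. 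I would confirm it by checking that, along each branch of the classification, the entries placed into $\mbfD$ are either rational or quadratic irrationalities, after which the two displayed estimates multiply to give the stated bound.
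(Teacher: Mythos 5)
Your overall architecture coincides with the paper's: the tower $\mbbQ\subseteq\mbbQ\paren{\mbfD}\subseteq\mbbQ\paren{\mbfD,\th}$, the bound $[\mbbQ\paren{\mbfD,\th}:\mbbQ\paren{\mbfD}]\le 2$ coming from the quadratic $f$, and the refinement that an irreducible quadratic would force $a_{0}/a_{2}$ to be a unit because $\th$, and hence its conjugate root, is a root of unity. Your Vieta-plus-embeddings phrasing of that last point is the same argument the paper runs by observing that the minimal polynomial of $\th$ over $\mbbQ\paren{\mbfD}$ divides the cyclotomic minimal polynomial of $\th$ over $\mbbQ$, so that its constant term is a product of roots of unity. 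The one step where you genuinely part ways is the bound $[\mbbQ\paren{\mbfD}:\mbbQ]\le 2^{\abs{\mbfD}}$. You propose to derive it from the claim that every categorical dimension of the categories at hand has degree at most $2$ over $\mbbQ$ and then bound the compositum; that claim is false even in rank $5$ --- the pseudo-unitary modular category $SU(2)_9/\mbbZ_2$ appearing in the main theorem has noninteger simple dimensions generating the degree-$5$ real cyclotomic field $\mbbQ\paren{\zeta_{11}+\zeta_{11}^{-1}}$ --- so it cannot be ``extracted from the constraints specific to rank $5$'' as you hope. The paper instead uses the relative statement that each $d\in\mbfD$ is a root of a \emph{monic} quadratic over $\mbbQ\paren{\mbfD\setminus\lcb d\rcb}$, which comes from the dimension equation \eqref{dimension} applied to $d_{x}d_{x^{*}}=d_{x}^{2}$, and then climbs a tower of at-most-quadratic extensions; this yields $2^{\abs{\mbfD}}$ with no absolute degree bound on the individual $d_{a}$. (Strictly, even that step requires the dimensions occurring in $X\otimes X^{*}$ to lie in $\mbbQ\paren{\mbfD}$; this holds in every instance where the lemma is invoked, namely $\mbfD$ a singleton with all remaining dimensions integral, and it is the hypothesis you should make explicit in place of the per-dimension degree-$2$ claim.) With that substitution your argument closes and is, in substance, the paper's proof.
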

\begin{proof}
  Let $\a$ be a primitive element of $\mbbQ\paren{\mbfD}$  Then the minimal
  polynomial of $\theta$ over $\mbbQ\paren{\mbfD}$ divides $f$ in
  $\mbbQ\paren{\mbfD}$. In particular, if we let $m$ denote the degree of
  $\mbbQ\paren{\mbfD,\th}$ over $\mbbQ\paren{\mbfD}$, then $m\leq 2$. 
  Note that the minimal polynomial of $\th$ over $\mbbQ\paren{\mbfD}$ must
  divide the minimal polynomial of $\th$ over $\mbbQ$. Since $\th$ is a root of
  unity we can conclude that the minimal polynomial over $\mbbQ\paren{\mbfD}$
  can only possibly have degree $2$ if $a_{2}\neq0$ and $a_{0}/a_{2}$ is a unit.
  In particular, 
  \begin{align*}
    m\leq \begin{cases}
      2 & \text{if $a_{2}\neq 0$ and $a_{0}/a_{2}$ is a unit}\\
      1&\text{if $a_{2}=0$ or $a_{0}/a_{2}$ is a unit.}
    \end{cases}
  \end{align*}

  Next note that for any $d\in\mbfD$, $d$ satisfies a degree $2$ monic polynomial
  over $\mbbQ\paren{\mbfD\setminus\lcb d\rcb}$. In particular,
  $\mbbQ\paren{\mbfD}$ has degree at most $2^{\abs{\mbfD}}$ over $\mbbQ$. Thus
  $\mbbQ\paren{\mbfD,\th}$ has degree at most $m2^{\abs{\mbfD}}$ over $\mbbQ$.
  Since $\mbbQ\paren{\th}$ is a subfield the result follows.
\end{proof}
\begin{lemma}
  \label{Lemma: Last S-matrix Entry}
  Let $\mcC$ be a rank $r$ premodular category with rank $r-1$ M\"{u}ger center,
  and order the simples so that the last object, $X_{r}$, is not in $\mcC'$.
  Then $S_{r,r}=-\dim\mcC'$.
\end{lemma}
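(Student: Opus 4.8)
The plan is to solve for the single unknown entry $s_{r,r}$ by pairing the $X_r$-row of the $S$-matrix against the row of quantum dimensions, using that every simple except $Y:=X_r$ is transparent. First I would record what the hypothesis buys us. Since $\mcC'$ has rank $r-1$, the $r-1$ simples other than $Y$ all lie in the M\"uger center, so by the defining property of $\mcC'$ one has $s_{c,y}=d_cd_y$ for every $c\in\Irr(\mcC')$ and every $y$; in particular $s_{\mbbI,k}=d_k$ for the unit $\mbbI$, and $s_{Y,c}=s_{c,Y}=d_cd_Y$ for all $c\in\Irr(\mcC')$. Hence the corner $s_{r,r}$ is the only entry of $S$ not already dictated by the representation theory of $\mcC'\cong\Rep(G,z)$.

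The key step is an orthogonality relation for the (degenerate) $S$-matrix. I would invoke it in the form $\sum_{k} s_{\mbbI,k}\,\overline{s_{Y,k}}=\dim(\mcC)\cdot m$, where $m$ is the multiplicity of $Y$ in $\mbbI\otimes A$ and $A=\bigoplus_{c\in\Irr(\mcC')}d_c\,c$ is the regular object of the M\"uger center. This is the correct premodular substitute for the usual second orthogonality (equivalently, it is the computation of $S\overline{S}^{\,T}$ using $A$). Because $A$ is supported entirely on $\mcC'$ while $Y\notin\mcC'$, the multiplicity $m$ vanishes, so the $Y$-row is orthogonal to the dimension vector: $\sum_k d_k\,\overline{s_{Y,k}}=0$.

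Expanding this sum and substituting the transparency values yields $\sum_{c\in\Irr(\mcC')}d_c\,(d_cd_Y)+d_Y\,\overline{s_{r,r}}=d_Y\bigl(\sum_c d_c^2+\overline{s_{r,r}}\bigr)=0$. Since $d_Y\neq 0$ and $\sum_{c\in\Irr(\mcC')}d_c^2=\dim(\mcC')$, I obtain $\overline{s_{r,r}}=-\dim(\mcC')$; as the right-hand side is real, $s_{r,r}$ is real and equals $-\dim(\mcC')$, as claimed. Note the answer is forced to be negative, consistent with $Y$ being non-transparent (indeed $-\dim(\mcC')\neq d_Y^2$).

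The main obstacle is justifying the orthogonality relation of the second step: the familiar identity $\sum_k s_{i,k}\overline{s_{j,k}}=\delta_{ij}\dim(\mcC)$ holds only in the modular case and must fail here, since $S$ is degenerate (its rank collapses to $2$ because the $r-1$ transparent columns are all proportional to the dimension vector). Deriving or citing the premodular replacement, whose right-hand side is $\dim(\mcC)$ times the multiplicity of $X_j$ in $X_i\otimes A$, is the crux; once it is in hand the vanishing of the relevant multiplicity and the solution for $s_{r,r}$ are immediate. A secondary, purely bookkeeping point is that $Y$ is the \emph{unique} non-transparent simple, so that $s_{r,r}$ is the only undetermined term appearing in the row sum.
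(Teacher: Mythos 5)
Your proof is correct and follows essentially the same route as the paper's: both solve for $S_{r,r}$ from the orthogonality of the $X_r$-column of $S$ against the dimension (unit) column, using transparency to fill in all other entries. The only difference is that you carefully justify that orthogonality via M\"uger's $S\overline{S}^{\,T}$ identity for premodular categories, whereas the paper simply asserts it; your version is, if anything, more complete.
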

\begin{proof}
  Since $X_{r}$ is not in the M\"{u}ger center, the $r$-th column of the
  $S$-matrix must be orthogonal to the $0$-th column. However, $S_{\ell,0}=d_{\ell}$
  for $1\leq \ell\leq r$, and $S_{k,r}=d_{k}d_{r}$ for $1\leq k\leq r-1$. Thus
  the orthogonality condition reads $d_{r}\dim\mcC=d_{r}S_{r,r}$.
\end{proof}

%%%%%%%%%%%%%%%%%%%%%%%%%%%%%%%%%%%%%%%%%%%%%%%%%%%%%%%%%%%%%%%%%%%%%%%%%%%%%%%
% Classification of Muger Center
%%%%%%%%%%%%%%%%%%%%%%%%%%%%%%%%%%%%%%%%%%%%%%%%%%%%%%%%%%%%%%%%%%%%%%%%%%%%%%%%
\section{Classification of Rank 5 Premodular Categories}
\label{Section: Classification}
In this section we will classify rank 5 pseudo-unitary premodular categories
upto premodular datum, and in particular Grothendieck equivalence. The
classification of pseudo-unitary rank 5 modular categories can be found in
\cite{BNRW2} so it suffices to consider symmetric and properly
premodular categories. 

We begin with symmetric premodular categories. By 
\thmref{Theorem: Deligne Symmetric Classification}  and
\lemmaref{Lemma: Tannakian} it suffices to determine all groups which have exactly $5$
irreducible representations. The number of irreducible representations can be
related to the order of the group via a classical number theoretic argument
\cite[lemma 4.4 (ii)]{BR1}. In particular, we have
\begin{equation*}
  5 \leq |G| \leq A_5 \Longrightarrow 5 \leq |G|\leq 1806
\end{equation*}
Applying these bounds one can perform an exhaustive search in GAP to deduce:
\begin{prop}
  \label{Prop: Symmetric}
	If $\mcC$ is a symmetric rank 5 category, then $\mcC$ is Tannakian and is
  given by $\Rep(G)$ where % is given by $\Rep(G,z)$ where
  $G$ is $\mbbZ_5$, $D_8$, $Q_8$, $D_{14}$, $\mbbZ_5\rtimes \mbbZ_4$, $\mbbZ_7
  \rtimes \mbbZ_3$, $\mfS_4$, or $\mfA_5$.%, and $z$ is a central element of order 2.
\end{prop}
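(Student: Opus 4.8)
The plan is to convert the categorical hypothesis into a finite group census. First I would apply \thmref{Theorem: Deligne Symmetric Classification}: being a symmetric ribbon fusion category, $\mcC$ is equivalent to $\Rep(G,z)$ for some finite group $G$ and central $z$ of order at most $2$. Because $\mcC$ is symmetric it is its own M\"{u}ger center, of odd rank $5$, so \lemmaref{Lemma: Tannakian} applies: tensoring by an order-$2$ invertible object is an involution of the odd-cardinality set $\Irr(\mcC)$ and hence fixes a simple, forcing its twist to be $+1$. Thus no order-$2$ invertible object is a fermion, the super-Tannakian case is excluded, and $\mcC\cong\Rep(G)$ is Tannakian, which is the first assertion. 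Under this equivalence the rank equals the number of isomorphism classes of irreducible complex representations of $G$, equivalently the number of conjugacy classes of $G$, so the hypothesis becomes the purely group-theoretic condition that $G$ has exactly five conjugacy classes.

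Next I would bound $|G|$. Writing the class equation in the form $\sum_{i=1}^{5}1/c_i=1$, where the $c_i$ are the orders of the centralizers of class representatives (each dividing $|G|$, the identity class contributing the term $1/|G|$), a greedy Egyptian-fraction estimate of Landau--Sylvester type bounds the largest denominator, which is $|G|$ itself. The extremal decomposition is $1=\tfrac12+\tfrac13+\tfrac17+\tfrac1{43}+\tfrac1{1806}$, so $5\leq|G|\leq1806$; this is exactly the classical bound recorded in \cite[Lemma~4.4(ii)]{BR1}.

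Finally, within this finite range I would run an exhaustive search over the SmallGroups library in GAP, retaining those groups with exactly five conjugacy classes. The search returns the eight groups listed, and one checks directly that each genuinely has five classes: $\mbbZ_5$ is abelian of order $5$; $D_8$ and $Q_8$ each have two central classes and three classes of size $2$; $D_{14}$ has the identity, three rotation classes, and a single reflection class; $\mbbZ_5\rtimes\mbbZ_4$ and $\mbbZ_7\rtimes\mbbZ_3$ are Frobenius groups with five classes; and $\mfS_4$ and $\mfA_5$ have five cycle types.

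I expect the main obstacle to be computational rather than conceptual: the bound $|G|\leq1806$ still leaves a great many isomorphism types to inspect, so completeness of the list rests on the SmallGroups database rather than on hand verification. The one genuinely delicate categorical input is the Tannakian reduction itself — the exclusion of super-Tannakian refinements — which is precisely why the odd-rank parity argument underlying \lemmaref{Lemma: Tannakian} is needed rather than a naive count of dimensions or twists.
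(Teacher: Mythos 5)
Your proposal is correct and follows essentially the same route as the paper: Deligne's theorem plus \lemmaref{Lemma: Tannakian} (via the odd-rank parity argument) to reduce to Tannakian $\Rep(G)$, the classical Landau-type bound $5\leq|G|\leq 1806$ from \cite[Lemma 4.4(ii)]{BR1}, and an exhaustive GAP search over the resulting range. The extra detail you supply on the extremal Egyptian-fraction decomposition and the class counts of the eight groups is a faithful elaboration of what the paper leaves implicit.
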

Having completed the classification of rank 5 symmetric categories we find it
convenient to stratify our analysis of rank 5 properly premodular categories by the
rank of the M\"{u}ger center. From Propositions \ref{Prop: Symmetric}, \ref{Prop: Rank 4 Muger Center},
\ref{Prop: Rank 3 Muger Center}, and \ref{Prop: Rank 2 Muger Center} we will be
able to prove:
\begin{thm}
  \label{Theorem: Main Theorem}
	If $\mcC$ is a pseudo-unitary rank 5 premodular category, then
	\begin{itemize}
		\item $\mcC$ is symmetric and is given by $\Rep(G,z)$ where $G$ is
    $\mbbZ_5$, $D_8$, $Q_8$, $D_{14}$, $\mbbZ_5\rtimes \mbbZ_4$, $\mbbZ_7
    \rtimes \mbbZ_3$, $\mfS_4$, or $\mfA_5$, and $z$ is a central element of
    order at most 2.
		\item $\mcC$ is properly premodular and Grothendieck equivalent to:
    \begin{itemize}
      \item $\Rep(D_8)$ with $\mcC'\cong\Rep\paren{\mbbZ_2}$,
      $d_{i}=\paren{1,1,2,1,1}$, and 
      $T=\diag\paren{1,1,\th,-1,-1}$, where $\th$ is a root of unity satisfying
      a monic degree 4 polynomial over $\mbbZ$.
      \item $\Rep(D_{14})$ with $\mcC'\cong\Rep\paren{\mbbZ_{2}}$, and
      $d_{i}=\paren{1,1,2,2,2}$.
      \item $\Rep(\mfS_4)$ with $\cl C'\cong\Rep\paren{\mfS_3}$,
      $d_{i}=\paren{1,1,2,3,3}$, and $T=\diag\paren{1,1,1,-1,-1}$.
      \item $PSU(2)_8$, and is obtainable as a $\bb{Z}_2$-equivariantization of $Fib\boxtimes Fib$.
  
    \end{itemize}
		\item $\mcC$ is modular and it is Grothendieck equivalent to $SU(2)_4, SU(2)_9/\mbbZ_2, SU(5)_1$, or $SU(3)_4/\mbbZ_3$ \cite{BNRW2}.
	\end{itemize}
  Moreover, each case is realized.
\end{thm}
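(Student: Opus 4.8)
The plan is to assemble the statement from a case analysis organized by the rank of the M\"uger center $\mcC'$, which is itself a symmetric fusion subcategory of $\mcC$. Since $\mcC'$ always contains the unit object, $1 \le \rank\mcC' \le 5$, and the two extremes $\rank\mcC' = 1$ and $\rank\mcC' = 5$ are exactly the modular and symmetric cases, leaving $2 \le \rank\mcC' \le 4$ for the properly premodular regime. The modular case is already resolved in \cite{BNRW2}. The symmetric case follows by combining \propref{Prop: Symmetric} with \thmref{Theorem: Deligne Symmetric Classification} and \lemmaref{Lemma: Tannakian}: the first enumerates the eight admissible groups $G$, while the latter two promote the bare fusion category $\Rep(G)$ to the ribbon datum $\Rep(G,z)$ with $z$ central of order at most $2$. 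It then remains to dispatch the three properly premodular strata, one per proposition.

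For each stratum I would first determine $\mcC'$ itself. Because $\mcC$ has odd rank $5$, \lemmaref{Lemma: Tannakian} forces $\mcC'$ to be Tannakian, so $\mcC' \cong \Rep(G)$ for a group $G$ having exactly $\rank\mcC'$ irreducible representations; this pins $G = \mbbZ_2$ when $\rank\mcC' = 2$ and restricts $G$ to the short list of groups with $3$ or $4$ conjugacy classes in the other two strata. Fixing $\mcC'$ supplies the dimensions of every simple lying in it, the corresponding $S$-matrix entries $s_{x,y} = d_x d_y$, and, via \lemmaref{Lemma: Tannakian}, the twists of its order-two simples. The dimensions of the simples outside $\mcC'$ are then constrained by the dimension equation \eqref{dimension}; when $\rank\mcC' = 4$ there is exactly one such simple $X_5$, and the single orthogonality relation $S_{5,5} = -\dim\mcC'$ of \lemmaref{Lemma: Last S-matrix Entry} becomes available. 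I expect this relation, together with the integrality of the fusion coefficients, to be stringent enough to rule out the rank-$4$ stratum altogether, which is why no such category appears in the conclusion.

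The engine of the remaining two strata is the minimal modularization (de-equivariantization) of $\mcC$ along $\Rep(G) = \mcC'$ of \cite{Brug}, which yields a modular category of rank at most $4$ and total dimension $\dim\mcC/|G|$. Since rank $\le 4$ modular categories are classified, I would run the construction in reverse: the demand that such a small modular category admit a $G$-equivariantization compatible with the prescribed fusion rules and twists, subject to the arithmetic constraints imposed by the Galois action on the modular data \cite{BN1,BGNPRW1}, should collapse the possibilities to the finite list claimed. This is precisely the mechanism producing $PSU(2)_8$ as the $\mbbZ_2$-equivariantization of $\Fib\boxtimes\Fib$. The twists of the simples outside $\mcC'$ are the last unknowns; to bound their orders I would substitute the known data into the balancing equation \eqref{balancing} to obtain a degree-two polynomial over $\mbbQ(\mbfD)$ satisfied by such a twist $\th$, and then invoke \lemmaref{Lemma: Cyclotomic Remark} to bound $[\mbbQ(\th):\mbbQ]$ and hence the cyclotomic orders in play. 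This is what leaves the $\Rep(D_8)$ case with a twist constrained only to be a root of a monic degree-$4$ integer polynomial.

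The step I expect to be the main obstacle is the rank-$2$ stratum. There $\mcC'$ is as small as possible, the global fusion rules of $\mcC$ are the least rigidly determined, and the modularization has the largest rank, so separating the genuine solutions $\Rep(D_8)$, $\Rep(D_{14})$, and $PSU(2)_8$ from spurious ones will require the full force of the Galois obstructions together with careful bookkeeping of which equivariantizations of the candidate rank-$4$ modular categories actually close up into a premodular category. Finally, realizability is settled by exhibiting each entry: the $\Rep(G)$ and symmetric families by the representation categories of the listed groups, the modular entries by their quoted Lie-theoretic models from \cite{BNRW2}, and $PSU(2)_8$ by the explicit $\mbbZ_2$-equivariantization of $\Fib\boxtimes\Fib$.
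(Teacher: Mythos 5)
Your skeleton matches the paper's: stratify by the rank of the M\"uger center, quote \cite{BNRW2} for the modular case, handle the symmetric case via \propref{Prop: Symmetric} together with \thmref{Theorem: Deligne Symmetric Classification} and \lemmaref{Lemma: Tannakian}, note that odd rank forces $\mcC'$ to be Tannakian, and then use de-equivariantization along $\mcC'$ plus balancing, the dimension equation, and \lemmaref{Lemma: Cyclotomic Remark} as the main engine. However, your central mechanism contains a genuine error: de-equivariantizing a rank $5$ category along $\mcC'\cong\Rep(G)$ does \emph{not} yield a modular category ``of rank at most $4$.'' The rank of $\mcC_{G}$ is governed by the orbit structure of the $G$-action: a simple object of $\mcC$ outside $\mcC'$ that is fixed under tensoring by the invertibles of $\mcC'$ \emph{splits} into several simples of $\mcC_{G}$, so the rank can grow. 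In the paper's analysis the modularizations have rank $4$, $7$, or even $13$: the $\Rep(D_{14})$ entry arises precisely from a rank $7$ pointed modularization (rank $2$ center, Case 2), the $\Rep(\mbbZ_{3})$ center forces a rank $7$ modularization, and the $\mfS_{3}$ center with $y\neq z$ forces a rank $13$ one. Consequently your plan of ``running the classification of rank $\le 4$ modular categories in reverse'' can neither produce the $\Rep(D_{14})$ case nor exclude the rank $7$ and rank $13$ configurations; it only covers the cases that happen to modularize to rank $4$ (the $\Rep(D_8)$, $PSU(2)_8$, and $\Rep(\mfS_4)$ realizations). To close those gaps the paper must invoke an entirely different toolkit: \lemmaref{galoislemma} combined with the classification of weakly integral modular categories \cite{BGNPRW1} (in particular, integral rank $7$ modular implies pointed), the equidimensionality of the universal grading \cite{GN2,DGNO1}, the dimension bound of \cite[Proposition 8.32]{ENO1}, the $pq$-classification \cite[Theorem 6.3]{EGO1}, and orbit-counting for $\mfS_{3}$-actions on pointed rank $13$ categories.

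A secondary, smaller gap is in the rank $4$ stratum: your hope that \lemmaref{Lemma: Last S-matrix Entry} plus integrality of fusion coefficients suffices is not borne out. The paper first pins down the dimension $d$ and the order of the twist of the lone noncentral simple via balancing, the dimension equation, and \lemmaref{Lemma: Cyclotomic Remark}, and even then must de-equivariantize: for $\mbbZ_{4}$ and $\mfA_{4}$ the contradiction is that an orbit-stabilizer count forces a quantum dimension such as $\paren{1+\sqrt{5}}/4$ or $2/\sqrt{3}$ that is not an algebraic integer; for $\mbbZ_{2}^{2}$ it is the impossibility of realizing $\Fib$ as the relevant de-equivariantization; and for $D_{10}$ it is a divisibility argument $\dim\mcC'\ndiv\dim\mcC$. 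So de-equivariantization is indispensable even in the stratum you expected to dispatch by pure arithmetic.
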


%%%%%%%%%%%%%%%%%%%%%%%%%%%%%%%%%%%%%%%%%%%%%%%%%%%%%%%%%%%%%%%%%%%%%%%%%%%%%%%%
% Classification of S and T for Muger Center of Rank 4
%%%%%%%%%%%%%%%%%%%%%%%%%%%%%%%%%%%%%%%%%%%%%%%%%%%%%%%%%%%%%%%%%%%%%%%%%%%%%%%%
\subsection{Rank 4 M\"uger Center}
%We continue our classification by considering the case where $\cl C'$ has rank $4$.
%The following theorem summarizes all the results in this section,

\begin{prop}
  \label{Prop: Rank 4 Muger Center}
	There is no rank $5$ properly premodular category with a rank $4$ M\"uger center.
\end{prop}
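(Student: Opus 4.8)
The plan is to leverage that the rank is odd. By \lemmaref{Lemma: Tannakian} the M\"uger center is then Tannakian, so $\mcC'\cong\Rep(G)$ for a finite group $G$ with $\abs{\Irr(G)}=4$; the groups with exactly four irreducible representations are $\mbbZ_4$, $\mbbZ_2\times\mbbZ_2$, $D_{10}$, and $\mfA_4$, with dimension vectors $(1,1,1,1)$, $(1,1,1,1)$, $(1,1,2,2)$, and $(1,1,1,3)$. Order the simples so that $X:=X_5$ is the unique object not in $\mcC'$, and write $d:=d_5$. I would first record the fusion of $\mcC'$ on $X$: for $V\in\mcC'$ and $W\in\mcC'$ one has $\Hom(V\otimes X,W)\cong\Hom(X,V^*\otimes W)=0$, so $V\otimes X$ has no central constituents and hence $V\otimes X\cong X^{\oplus\dim V}$. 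Dually, $X^*\otimes X$ contains each $V\in\Irr(G)$ with multiplicity $\dim V$, accounting for a central part of dimension $\sum_{V}(\dim V)^2=\abs{G}=\dim\mcC'$, together with $X^{\oplus k}$ where $k:=N_{X^*X}^{X}$. Comparing dimensions gives $d^2=\abs{G}+kd$.

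Next I would substitute this into the balancing equation \eqref{balancing} with $x=y=X$. Since $\mcC'$ is Tannakian every central twist equals $1$, so the right-hand side collapses to $\abs{G}+k\,\theta_X d$, while \lemmaref{Lemma: Last S-matrix Entry} supplies $S_{X,X}=-\abs{G}$. Dividing by $\theta_X$ then yields
\[
\theta_X+\theta_X^{-1}=-\frac{kd}{\abs{G}}=1-\frac{d^2}{\abs{G}}=2-\dim\mcC_G ,
\]
where $\mcC_G$ is the minimal modularization of $\mcC$, a modular category with $\dim\mcC_G=\dim\mcC/\abs{G}=1+d^2/\abs{G}$ and $\mcC\cong(\mcC_G)^{G}$. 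As $\theta_X$ is a root of unity the left-hand side lies in $[-2,2]$, forcing $\dim\mcC_G\le 4$.

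The core of the argument is the de-equivariantization count. Partition $\Irr(\mcC)$ into $G$-orbits: the unit orbit by itself yields the $\abs{\Irr(G)}=4$ objects of $\mcC'$, so rank $5$ forces exactly one further orbit $\mcO$, contributing exactly one simple. A nontrivial orbit contributes a single simple only if its stabilizer $G_Y$ is trivial or carries a nondegenerate $2$-cocycle; a trivial stabilizer gives a free orbit with $\rank\mcC_G=1+\abs{G}\ge 5$, impossible since pseudo-unitarity gives $\rank\mcC_G\le\dim\mcC_G\le 4$, so $G_Y$ is a nontrivial central-type subgroup and $\abs{G_Y}$ is a perfect square. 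Screening the four candidates, $\mbbZ_4$ (cyclic) and $D_{10}$ (no subgroup of square order) have no such $G_Y$ and are eliminated. The survivors are $G=\mbbZ_2\times\mbbZ_2$ with $G_Y=G$, whence $\rank\mcC_G=2$, and $G=\mfA_4$ with $G_Y$ the Klein four-subgroup, whence $\rank\mcC_G=4$. Combined with $\dim\mcC_G\le 4$, the first case pins $\mcC_G$ to the semion or Fibonacci category, and the second forces $\mcC_G$ pointed of dimension $4$ whose three non-unit objects, permuted transitively, are of order two (so the fusion group is $\mbbZ_2\times\mbbZ_2$) and form a single $\mfA_4$-orbit.

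Finally I would contradict each survivor. For $G=\mbbZ_2\times\mbbZ_2$ the rank-$2$ modular categories have trivial group of braided autoequivalences, so $G$ acts trivially on $\mcC_G$; the cocycle on $G_Y=G$ is then trivial rather than nondegenerate, so $\mcO$ in fact contributes $\abs{\Irr(\mbbZ_2\times\mbbZ_2)}=4$ simples and $\rank\mcC=8$, a contradiction. For $G=\mfA_4$ the three invertible objects share a common twist, and nondegeneracy of $\mcC_G$ forces that twist to be $-1$, so $\mcC_G\cong\SO(8)_1$ (the three-fermion theory) with $\mfA_4$ acting through the triality $\mfA_4\twoheadrightarrow\mbbZ_3\hookrightarrow\mfS_3=\Aut^{\br}(\SO(8)_1)$. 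This configuration is the main obstacle: the stabilizer $G_Y$ is the Klein four-group, which lies in the kernel of the action on objects, so its categorical action is invisible to $\Aut^{\br}$ and is recorded only by $\pi_1=\Aut_{\otimes}(\mathrm{Id})\cong\mathrm{Inv}(\SO(8)_1)\cong\mbbZ_2\times\mbbZ_2$; whether the induced $2$-cocycle on $G_Y$ is nondegenerate is exactly what must be decided. I expect to resolve it by computing that cocycle directly and showing it is degenerate—so that $\mcO$ again contributes four simples and $\rank\mcC=8$—or, should a clean cohomological vanishing not present itself, by writing out the premodular datum that the triality equivariantization would produce and exhibiting the Galois or balancing inconsistency anticipated in \sectionref{Section: Preliminaries}.
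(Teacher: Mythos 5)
Your framework is genuinely different from the paper's and is attractive where it works: you combine \lemmaref{Lemma: Last S-matrix Entry} with the balancing equation at $S_{XX}$ to get $\th_{X}+\th_{X}^{-1}=2-\dim\mcC_{G}$, hence $\dim\mcC_{G}\leq 4$, and then run an orbit--stabilizer count forcing the stabilizer of the unique nontrivial orbit to be of central type. This disposes of $\mbbZ_{4}$ and $D_{10}$ cleanly (the paper instead uses \lemmaref{Lemma: Cyclotomic Remark} to pin down $d$ numerically in each case --- $d=1+\sqrt{5}$, $d=3$, $d=2\sqrt{3}$ --- and then derives integrality or divisibility contradictions). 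However, your argument does not close the two remaining configurations, and in one of them the inference you make is actually invalid.

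For $G\cong\mbbZ_{2}^{2}$ with $\mcC_{G}$ of rank $2$, you argue that triviality of $\Aut_{\otimes}^{\br}\paren{\mcC_{G}}$ forces the $G$-action, and hence the cocycle on $G_{Y}=G$, to be trivial. That is exactly the $\pi_{0}$-versus-$\pi_{1}$ confusion you yourself flag in the $\mfA_{4}$ case: a categorical action with trivial image in $\pi_{0}=\Aut_{\otimes}^{\br}$ can still be nontrivial through $\pi_{1}\cong\mathrm{Inv}\paren{\mcC_{G}}$, and it is precisely this $\pi_{1}$-data that produces the cocycle $\a_{Y}$. Your argument is sound when $\mcC_{G}\cong\Fib$ (there $\pi_{1}$ is also trivial --- note the paper resorts to a private communication here), but for the semion $\mathrm{Inv}=\mbbZ_{2}$ and $H^{2}\paren{\mbbZ_{2}^{2},\mbbZ_{2}}\neq 0$, so the claim that $\a_{Y}$ must be trivial is unsupported; this subcase remains open in your write-up. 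Second, the $\mfA_{4}$ case with $\mcC_{G}\cong\SO\paren{8}_{1}$ and the triality action is explicitly left unresolved --- you state two strategies you \emph{expect} to work but carry out neither. This is not a minor loose end: that configuration passes the dimension equation and the balancing equations at $S_{XX}$ and $S_{a,X}$ with $d=6$, $\th_{X}=-1$, $k=4$, so some further input is genuinely required to kill it. The paper avoids it entirely because its cyclotomic/balancing analysis forces $d=2\sqrt{3}$ for $\mfA_{4}$, which is incompatible with $d=6$; your route, having discarded that numeric constraint, must confront the three-fermion equivariantization directly and does not. As it stands the proposal is an outline with two essential subcases missing.
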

In order to prove this result we proceed through Propositions \ref{Prop: Muger
Center Z4 or Klein 4}, \ref{Prop: Muger Center D10}, \ref{Prop: Muger Center
A4}.  These propositions consider the different structures of $\mcC'$.

By \thmref{Theorem: Deligne Symmetric Classification}, the structure of $\mcC'$
is $\Rep(G,z)$ as a symmetric category. Moreover, by \cite[Lemma 4.4(ii)]{BR1}
we have $|G|\leq 42$. Using GAP we can conclude that $G$ is isomorphic to
$\mbbZ_4$, $\mbbZ_2\times \mbbZ_2$, $D_{10}$, or $\mfA_4$. In each of these
cases we may apply \lemmaref{Lemma: Last S-matrix Entry} to deduce that
$s_{4,4}=-4$. Furthermore, in each case we can apply \thmref{Theorem: Deligne
Symmetric Classification} and \lemmaref{Lemma: Tannakian} along with
\eqref{balancing} applied to $S_{a,a}$ for $0\leq a\leq 4$ to deduce that
$\th_{a}=1$ for $0\leq a\leq 3$.  In particular, the $S$- and $T$-matrices are
determined by the dimension, $d$, and twist, $\th$, of $X_{4}$. Moreover, by
\lemmaref{Lemma: Cyclotomic Remark} we can conclude that the order, $n$, of
$\th$ satisfies $n\in\lcb 1,2,3,4,5,6,8,10,12\rcb$. The cases of $\mbbZ_{4}$ and
$\mbbZ_{2}\times\mbbZ_{2}$ are sufficiently similar that we consider them in
tandem.

\begin{prop}
  \label{Prop: Muger Center Z4 or Klein 4}
  There is no rank 5 premodular category with M\"{u}ger center Grothendieck
  equivalent to $\Rep\paren{\mbbZ_{4}}$ or $\Rep\paren{\mbbZ_{2}^{2}}$.
\end{prop}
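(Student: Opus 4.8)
The plan is to pin down the fusion rules and premodular datum first, then pass to the minimal modularization and derive a contradiction from its rank. Since both $\Rep\paren{\mbbZ_{4}}$ and $\Rep\paren{\mbbZ_{2}^{2}}$ are pointed, the assumption that $\mcC'$ has this form forces $d_{0}=\cdots=d_{3}=1$, and I may assume from the discussion preceding the statement that $\theta_{a}=1$ for $0\le a\le 3$ and that $s_{4,4}=-4$. If $d_{4}=1$ as well then $\mcC$ is pointed of rank $5$, hence $\Rep\paren{\mbbZ_{5}}$, whose fusion subcategories cannot have order $4$; so $d_{4}>1$ and $X_{4}$ is the unique non-invertible simple. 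Tensoring by the invertibles permutes $\Irr\paren{\mcC}$ and fixes $X_{4}$ (it is the only simple of its dimension), so $X_{i}\otimes X_{4}\cong X_{4}$ for $0\le i\le 3$; in particular $X_{4}$ is self-dual and $X_{4}\otimes X_{4}$ is invariant under the regular action of the invertibles. Hence the four invertibles appear in $X_{4}\otimes X_{4}$ with a common multiplicity, which self-duality forces to be $1$, giving $X_{4}\otimes X_{4}\cong X_{0}\oplus X_{1}\oplus X_{2}\oplus X_{3}\oplus p\,X_{4}$ for some integer $p\ge 0$. The dimension equation \eqref{dimension} then reads $d^{2}=4+pd$, where $d=d_{4}$.

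Next I would extract $\theta_{4}$. Applying \eqref{balancing} to $S_{4,4}$ and using $\theta_{a}=1$, $d_{a}=1$ for $a\le 3$ together with $s_{4,4}=-4$ gives $-4\theta_{4}^{2}=4+pd\,\theta_{4}$, i.e. $4\theta_{4}^{2}+pd\,\theta_{4}+4=0$, so that $\theta_{4}+\theta_{4}^{-1}=-pd/4=\paren{4-d^{2}}/4$. As $\theta_{4}$ is a root of unity this quantity is a real algebraic integer in $[-2,2]$; the lower bound forces $d^{2}\le 12$, whence, since $d=\paren{p+\sqrt{p^{2}+16}}/2$, we get $p\le 2$, and for $p=1$ one finds $\theta_{4}+\theta_{4}^{-1}=-\paren{1+\sqrt{17}}/8$, which is not an algebraic integer. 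Thus $p\in\lcb 0,2\rcb$: either $d=2$ and $\theta_{4}=\pm i$, or $d=1+\sqrt{5}$ and $\theta_{4}$ is a primitive fifth root of unity. Both orders lie in the admissible set recorded before the statement, so the arithmetic alone does not finish the argument.

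The decisive step is de-equivariantization. By \lemmaref{Lemma: Tannakian} the center $\mcC'$ is Tannakian, say $\mcC'\cong\Rep\paren{A}$ with $A\in\lcb\mbbZ_{4},\mbbZ_{2}^{2}\rcb$, so the minimal modularization $\mcD=\mcC_{A}$ is modular with $\dim\mcD=\dim\mcC/\abs{A}=\paren{4+d^{2}}/4=1+d^{2}/4$, and $\mcC\cong\mcD^{A}$. For $p=2$ we get $\dim\mcD=\paren{5+\sqrt 5}/2$ with a non-invertible simple of dimension $\paren{1+\sqrt5}/2$, forcing $\mcD\cong\Fib$. Since $\Fib$ has neither nontrivial invertibles nor nontrivial braided autoequivalences, every $A$-action is trivial and $\mcC\cong\Fib\boxtimes\Rep\paren{A}$ has rank $2\abs{A}=8$, a contradiction. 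For $p=0$ we get $\dim\mcD=2$, so $\mcD$ is a pointed modular category on $\mbbZ_{2}$. When $A=\mbbZ_{4}$, $H^{2}\paren{\mbbZ_{4},\mbbC^{\times}}=0$ forces each of the two simples of $\mcD$ to contribute $\abs{\Irr\paren{\mbbZ_{4}}}=4$ simples to $\mcC$, again giving rank $8$ and a contradiction.

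The main obstacle is the remaining case $p=0$, $A=\mbbZ_{2}^{2}$. Here $\mcD$ carries a nontrivial invertible object, so $\Aut^{\br}\paren{\mcD}$ has nontrivial $\pi_{1}$, and a cohomologically nontrivial $\mbbZ_{2}^{2}$-action can in principle make the nonunit simple of $\mcD$ contribute a single two-dimensional simple to $\mcC$ — via the nontrivial class of $H^{2}\paren{\mbbZ_{2}^{2},\mbbC^{\times}}$ — producing exactly rank $5$ with dimensions $\paren{1,1,1,1,2}$ and $\theta_{4}=\pm i$, precisely matching the datum above. Excluding this twisted equivariantization is the crux of the proof: I would do so by feeding the candidate premodular datum back through the consistency conditions it must satisfy — tracking the Gauss sum and central charge of the putative modularization against those forced by the fixed $T$-matrix, and invoking the structural results on modularizations of Brugui\`eres and Natal\'e cited above — to show that no braided $\mbbZ_{2}^{2}$-action on a pointed rank-$2$ modular category yields a premodular category whose M\"uger center is exactly $\Rep\paren{\mbbZ_{2}^{2}}$. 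This single case is where I expect essentially all the difficulty to concentrate.
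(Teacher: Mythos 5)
Your reduction of the fusion rules to $X_{4}^{\otimes 2}=\bigoplus_{i=0}^{3}X_{i}\oplus pX_{4}$ with $d^{2}=4+pd$, and the elimination of $p=1$ (non-integrality of $\th_{4}+\th_{4}^{-1}$) and of $p=2$ (de-equivariantization to $\Fib$, which admits only the trivial $A$-action since $\Aut_{\otimes}(\mathrm{id}_{\Fib})$ and the group of braided autoequivalences are both trivial, so the equivariantization has rank $8$), are correct; for $p=2$ your argument is in fact more self-contained than the paper's, which reaches $\mcC_{\mbbZ_{2}^{2}}\cong\Fib$ the same way but disposes of it by citing a private communication. The paper never confronts your $p=0$ branch: it asserts, without displaying the computation, that the only order of $\th_{4}$ consistent with \eqref{balancing} and \eqref{dimension} is $n=10$, which forces $d=1+\sqrt{5}$ and lands everything in your $p=2$ case. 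Your own (correct) balancing computation shows that $n=4$, $d=2$, $p=0$ also satisfies every constraint assembled before the proposition, so you cannot simply defer to that assertion.

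The genuine gap is therefore exactly the case you flag and leave open: $p=0$, $d_{4}=2$, $\th_{4}=\pm i$, $A=\mbbZ_{2}^{2}$, and the strategy you sketch for it will not close it. The Gauss-sum/central-charge bookkeeping you propose is automatically satisfied: $\tau^{+}(\mcC)=4+4\th_{4}=4(1\pm i)=\abs{A}\,\tau^{+}(\mcD)$ where $\mcD$ is the rank-$2$ pointed modular category with $\th_{y}=\pm i$, which is precisely the value a genuine $\mbbZ_{2}^{2}$-equivariantization of $\mcD$ would have, and the structural results of Brugui\`eres and Burciu--Natale are likewise consistent with the candidate datum. Worse, the twisted action you worry about appears to exist: $\Aut_{\otimes}(\mathrm{id}_{\mcD})\cong\mbbZ_{2}$, so actions of $\mbbZ_{2}^{2}$ on $\mcD$ by the identity braided autoequivalence are classified by $H^{2}(\mbbZ_{2}^{2},\mbbZ_{2})$, and any class with nontrivial image in $H^{2}(\mbbZ_{2}^{2},\mbbC^{\times})$ yields an equivariantization with four invertibles over $\mbbI$ and a single two-dimensional simple over $y$ --- a rank-$5$ integral premodular category with M\"{u}ger center exactly $\Rep(\mbbZ_{2}^{2})$ (the braided analogue of realizing $\Rep(D_{8})$ as $\Rep(\mbbZ_{2})^{\mbbZ_{2}^{2}}$ along the extension class of $D_{8}$). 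So this case is not a routine verification you can wave at: either it must be refuted by an argument that neither your proposal nor the paper supplies, or it survives and the proposition as stated needs amending. As written, your proof is incomplete at its acknowledged crux.
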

\begin{proof}
  In both cases the only $n\in\lcb 1,2,3,4,5,6,8,10,12\rcb$ consistent with
  equations \eqref{balancing} and \eqref{dimension} is $n=10$, consequently
  $d=1+\sqrt{5}$. Letting $G$ denote the group such that
  $\mcC'\cong\Rep\paren{G}$ we can de-equivariantize to produce a modular
  category $\mcC_{G}$. Let $\hat{D}$ denote the set of orbits of isomorphism
  classes of the $G$-action and let $D$ denote a transversal of $\hat{D}$.
  Then the simples of $\mcC$ can be understood via equivariantization as pairs
  $\paren{y,V}$ where $y$ runs over $D$ and $V$ ranges over the simple
  projective representations of $\Stab_{G}\paren{y}$. Since $\mbbI$ is
  stabilized by $G$ and $G$ is an abelian group, four of the simples in $\mcC$
  are of the form $\paren{\mbbI,\chi_{a}}$ where $\chi_{a}$ is a irreducible
  character of $G$. The remaining simple is $\paren{y,V}$ where $V$ is the sole
  simple projective representation of $\Stab_{G}\paren{y}$. In the case of
  $G\cong\mbbZ_{4}$ we recall that $H^{2}\paren{\mbbZ_{2k},\mbbC^{\times}}=0$ for
  all $k$, and so we may take $V$ to be the sole irreducible linear
  representation. In particular,
  $\dim\paren{y}=\paren{1+\sqrt{5}}/4\notin\bar{\mbbZ}$, an impossibility. So it
  suffices to consider $G\cong\mbbZ_{2}^{2}$. In this case
  $H^{2}\paren{\mbbZ_{2}^{2},\mbbC^{\times}}\cong\mbbZ_{2}$ and so, $V$ must be
  the 2-dimensional simple projective corresponding to the
  nontrivial 2-cocycle in $H^{2}\paren{\mbbZ_{2}^{2},\mbbC^{\times}}$. In
  particular, $\mcC_{\mbbZ_{2}^{2}}\cong\Fib$. This is not possible
  \cite{GalindoCommunication}. 
  \hide{\cmocomment{Does $\Fib$ admit a Klein-Four action?}}
\end{proof}

\begin{prop}
  \label{Prop: Muger Center D10}
  There is no rank $5$ premodular category $\cl C$ with $\cl C' = Rep(D_{10})$. 
\end{prop}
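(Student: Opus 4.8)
The plan is to follow the same strategy that dispatched the $\mbbZ_4$ and $\mbbZ_2^2$ cases, adapting it to $G \cong D_{10}$. First I would invoke the setup already established before \propref{Prop: Muger Center Z4 or Klein 4}: since $\mcC'\cong\Rep(D_{10})$ has rank $4$, \lemmaref{Lemma: Last S-matrix Entry} gives $s_{4,4}=-4=-\dim\mcC'$, and combining \thmref{Theorem: Deligne Symmetric Classification} and \lemmaref{Lemma: Tannakian} with the balancing equation \eqref{balancing} on the diagonal $S$-entries forces $\th_a=1$ for $0\le a\le 3$. So the entire premodular datum is pinned down by the dimension $d$ and twist $\th$ of the lone object $X_4$ outside the center, with the order $n$ of $\th$ restricted to $\lcb 1,2,3,4,5,6,8,10,12\rcb$ by \lemmaref{Lemma: Cyclotomic Remark}.

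Next I would determine the admissible values of $d$ and $\th$ by feeding the constraints into \eqref{balancing} and \eqref{dimension}. The dimensions of the four objects in $\Rep(D_{10})$ are $(1,1,2,2)$, so $\dim\mcC'=1+1+4+4=10$; wait—this must be reconciled with $s_{4,4}=-\dim\mcC'$, and here I expect $\dim\mcC'$ to enter the dimension equation \eqref{dimension} applied to $X_4\otimes X_4$, constraining $d$ through the fusion multiplicities $N_{4,4}^z$. I would solve the resulting algebraic relation together with the balancing equation to isolate the finitely many $(n,d)$ pairs compatible with $n\in\lcb 1,2,3,4,5,6,8,10,12\rcb$, anticipating that essentially one value of $d$ survives (likely again an algebraic integer of the golden-ratio type, given the pattern).

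With the surviving datum in hand, I would then de-equivariantize. Since the M\"uger center is Tannakian, \cite{Brug} produces a modular category $\mcC_{D_{10}}$, and the equivariantization picture describes the simples of $\mcC$ as pairs $\paren{y,V}$ with $y$ ranging over a transversal $D$ of the orbits $\hat D$ of the $D_{10}$-action and $V$ over the simple projective representations of $\Stab_{D_{10}}\paren{y}$. Counting: four simples come from $\paren{\mbbI,\chi_a}$ where the $\chi_a$ run over the irreducible characters of $D_{10}$ (which has exactly four, of dimensions $1,1,2,2$), leaving one further simple $\paren{y,V}$ to account for the fifth object $X_4$. The key numerical obstruction is that $\dim\mcC_{D_{10}}=\dim\mcC/\abs{D_{10}}$ must be an algebraic integer and the dimension $\dim(y)=d/\abs{\Stab}$ of the de-equivariantized object must lie in $\bar{\mbbZ}$; I expect one of these to fail, or else the resulting small modular category $\mcC_{D_{10}}$ to be forced into a shape (rank and fusion rules) that is known not to admit a $D_{10}$-action, exactly as $\Fib$ was excluded in the $\mbbZ_2^2$ case.

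The main obstacle I anticipate is twofold. First, $D_{10}$ is nonabelian, so $\mbbI$ is stabilized by all of $D_{10}$ but other orbits may have nontrivial stabilizers that are proper subgroups (cyclic $\mbbZ_5$ or $\mbbZ_2$, or trivial), and I must track the second cohomology $H^2\paren{\Stab,\mbbC^\times}$ to know which projective representations $V$ occur; since $H^2\paren{\mbbZ_m,\mbbC^\times}=0$ for cyclic groups, the projectivity subtlety that rescued the $\mbbZ_2^2$ analysis is absent here, which should actually simplify matters. Second—and this is the genuinely delicate point—I must verify that the rank, dimensions, and global dimension of the de-equivariantized modular category are mutually inconsistent, or that no $D_{10}$-action on the candidate modular category exists; establishing the latter cleanly may, as in the previous proposition, require either a direct dimension-integrality contradiction or an appeal to the classification of small modular categories together with their known automorphism and equivariantization structure.
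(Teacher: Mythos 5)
Your setup is the paper's: the preamble before \propref{Prop: Muger Center Z4 or Klein 4} applies verbatim, except that \lemmaref{Lemma: Last S-matrix Entry} gives $s_{4,4}=-\dim\mcC'=-10$ here (the paper's blanket ``$s_{4,4}=-4$'' is only literally correct for the two order-$4$ centers, a point you flag but leave unresolved), so the datum is indeed pinned down by $(d,\th)$ with the order $n$ of $\th$ in $\lcb 1,2,3,4,5,6,8,10,12\rcb$. Where you diverge is the endgame, and the paper is far more economical than what you plan. Solving \eqref{balancing} and \eqref{dimension} leaves only $n=4$ or $n=12$, and in both cases $d=3$ --- not a golden-ratio-type irrationality as you anticipate --- so $\dim\mcC=10+9=19$. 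The contradiction is then immediate: $\dim\mcC'=10\ndiv 19=\dim\mcC$, violating the divisibility of the dimension of a fusion subcategory into that of the ambient category (equivalently, in your language, $\dim\mcC_{D_{10}}=19/10$ fails to be an algebraic integer). This is precisely the first of the two obstructions you list, and it fires before any of the orbit/stabilizer, $H^{2}$, or small-modular-category analysis you brace for; none of that machinery is needed, and the delicate ``does a $D_{10}$-action exist'' question never arises. The genuine gap in your write-up is that the decisive computation --- extracting the admissible $(n,d)$ from the balancing and dimension equations --- is left undone, and your guess about its outcome is wrong; since every downstream branch of your argument depends on the value of $d$, the proof is not complete as written, though your strategy would close once $d=3$ is in hand.
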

\begin{proof}
%  By \lemmaref{Lemma: Last S-matrix Entry} $s_{4,4}=-10$ and $X_{4}=X_{4}^{*}$.
%  Examining the balancing equations for $S_{2,2}=S_{3,3}=4$ we have
%  $\th_{2}=\th_{3}=1$.
%  \begin{equation*}
%  \left.
%  \begin{aligned}
%      0 & = -4\theta_{2}^2 + 2\theta_3 +1 \\
%      0 & = -4\theta_{3}^2 + 2\theta_2 +1
%  \end{aligned}
%  \right\} \implies \theta_2 = \theta_3 = 1
%  \end{equation*}
%  Thus the $S$- and $T$-matrices are determined by the dimension, $d$, and
%  twist, $\th$, of $X_{4}$. Applying equations \ref{balancing} and
%  \ref{dimension} to $S_{4,4}$ and $d^{2}$ allows us to invoke \lemmaref{Lemma:
%  Lemma: Cyclotomic Remark} to conclude that the order of $\th$, $n$, is in
%  $\lcb 1,2,3,4,5,6,8,10,12\rcb$. 
  The only choice for $n\in\lcb 1,2,3,4,5,6,8,10,12\rcb$ consistent with equations
  \eqref{balancing} and \eqref{dimension} are $n=4$ or $12$. In both cases we have
  $d=3$ and $\dim\mcC=19$. This is not possible as
  $\dim\mcC'=10\ndiv\dim\mcC=19$.
\end{proof}
%=============================================================================
% A4 Muger Center
%=============================================================================
\begin{prop}
  \label{Prop: Muger Center A4}
  There is no rank $5$ premodular category $\cl C$ with $\cl C' = Rep(A_{4})$. 
\end{prop}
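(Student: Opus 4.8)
The plan is to follow the template of the preceding three propositions: first fix the fusion coefficients and the pair $(d,\th)$ attached to the unique non-central simple $X_4$, and then obstruct the few survivors by de-equivariantizing to a small modular category. From the analysis preceding \propref{Prop: Muger Center Z4 or Klein 4}, $\mcC'\cong\Rep(A_4)$ supplies simples $X_0,\dots,X_3$ of dimensions $1,1,1,3$ with $\th_0=\dots=\th_3=1$, $\dim\mcC'=12$, and $n:=\mathrm{ord}(\th_4)\in\lcb1,2,3,4,5,6,8,10,12\rcb$; by \lemmaref{Lemma: Last S-matrix Entry}, $S_{4,4}=-\dim\mcC'=-12$.

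First I would compute the fusion rules around $X_4$. The divisibility $\dim\mcC'=12\mid\dim\mcC=12+d^2$ disposes of the degenerate values $d\in\lcb1,3\rcb$, so $X_4$ is self-dual and is fixed by the two nontrivial invertibles of $\Rep(A_4)$; thus $N_{4,4}^0=N_{4,4}^1=N_{4,4}^2=1$, and Frobenius reciprocity against the $3$-dimensional simple forces $X_3\otimes X_4=3X_4$ and $N_{4,4}^3=3$. Writing $m=N_{4,4}^4$, equation \eqref{dimension} reads $d^2=12+md$, while \eqref{balancing} at $(X_4,X_4)$ gives
\begin{equation*}
	12\th^2+md\,\th+12=0,\qquad\text{so}\qquad \th+\th^{-1}=-md/12.
\end{equation*}
Substituting $d=\paren{m+\sqrt{m^2+48}}/2$ and comparing $-md/12$ against the admissible values $2\cos(2\pi k/n)$ (Niven's theorem handling the rational cases) should leave exactly the two candidates $(m,d,\th)=(0,2\sqrt3,\pm i)$ and $(m,d,\th)=(4,6,-1)$.

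The first candidate should fall immediately. Here $\dim\mcC=24$, so the minimal modularization $\mcC_{A_4}$ is modular of dimension $2$ and hence a pointed (semion) category; but then every simple of $\mcC\cong\paren{\mcC_{A_4}}^{A_4}$ has integer Frobenius--Perron dimension, contradicting $d_4=2\sqrt3$.

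The hard part will be $(m,d,\th)=(4,6,-1)$, where $\dim\mcC=48$ and $\mcC_{A_4}$ is modular of dimension $4$. Running the orbit--stabilizer description of $\mcC\cong\paren{\mcC_{A_4}}^{A_4}$ backwards, the single extra simple $X_4$ must arise from one orbit whose stabilizer has a simple twisted group algebra; scanning the subgroups of $A_4$ this forces the stabilizer to be $V_4$ with a nontrivial $2$-cocycle and $\mcC_{A_4}$ to be pointed over $\mbbZ_2^2$ with an $A_4$-invariant nondegenerate quadratic form, i.e. the three-fermion theory $\SO(8)_1$, with $A_4$ acting through $A_4\twoheadrightarrow\mbbZ_3$ on the three fermions. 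Because $V_4$ is the kernel of this action, the identification $\mcD^{V_4}\cong\mcD\boxtimes\Rep(V_4)$ makes $\mcC\cong\paren{\SO(8)_1\boxtimes\Rep(V_4)}^{\mbbZ_3}$, which has rank $8$ rather than $5$; attaining rank $5$ instead would demand that the stabilizer cocycle in $H^2(V_4,\mbbC^\times)$ be nontrivial, and the only possible source for it is the $\pi_1=\mathrm{Inv}\paren{\SO(8)_1}\cong\mbbZ_2^2$ part of the categorical $A_4$-action. The crux, exactly parallel to the $\Fib$ obstruction in \propref{Prop: Muger Center Z4 or Klein 4}, is to rule this out: I would show the three-fermion theory admits no $A_4$-action producing rank $5$, either by computing that the relevant class in $H^2\paren{A_4,\mathrm{Inv}\,\SO(8)_1}$ restricted to $V_4$ and paired with a fermion is forced to vanish, or by citing non-realizability of such an action.
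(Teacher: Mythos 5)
Your setup and your treatment of the first candidate are correct and run parallel to the paper. The reduction to the pair of solutions $\paren{N_{44}^{4},d,\th}=\paren{0,2\sqrt3,\pm i}$ and $\paren{4,6,-1}$ follows correctly from $d^{2}=12+md$ and $12\th^{2}+md\th+12=0$ (your Niven/algebraic-integer screening of $m=1,2,3$ and the bound $md\leq 24$ both check out), and your elimination of $d=2\sqrt3$ — de-equivariantize by all of $\mfA_{4}$ to get a modular category of dimension $2$, hence pointed, hence an integral equivariantization — is a valid variant of the paper's argument, which instead de-equivariantizes by $\mbbZ_{3}\cong\mfA_{4}/V_{4}$ and observes that $X_{4}$ must lie over a free $\mbbZ_{3}$-orbit, forcing $d_{4}/3=2/\sqrt3\notin\bar{\mbbZ}$.

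The genuine gap is the case $\paren{4,6,-1}$, which you correctly notice survives the dimension and balancing equations but then do not eliminate: the proposal ends with ``I would show \dots or by citing non-realizability,'' which is a plan, not a proof. The paper sidesteps this case entirely by asserting that only $n=4$ or $12$ are consistent with \eqref{balancing} and \eqref{dimension}, always with $d=2\sqrt3$; note that the paper's own orbit argument would not kill $d=6$ either, since a free $\mbbZ_{3}$-orbit would give the perfectly acceptable dimension $6/3=2$. So to close your gap you need either (a) an additional algebraic constraint at the numerical stage — the natural candidate is the indicator-type identity from \cite{B1} that the paper invokes as \eqref{equation: theta condition} in \propref{Prop: Rank 2 Muger Center}: with $\th_{0}=\dots=\th_{3}=1$ and $\th_{4}=-1$ all twist ratios square to $1$, so $\frac{1}{D^{2}}\sum_{b,c}N_{b,c}^{4}d_{b}d_{c}\paren{\th_{b}/\th_{c}}^{2}=d_{4}\dim\mcC/\dim\mcC=6$, which is incompatible with the admissible values for a self-dual object — or (b) an actual completion of your equivariantization analysis, which as written also has holes: a modular category of Frobenius--Perron dimension $4$ need not be pointed over $\mbbZ_{2}^{2}$ (Ising and $\mbbZ_{4}$-pointed categories must be excluded first, though both are easy), and the decisive claim that no $\mfA_{4}$-action on the three-fermion theory induces a nondegenerate $2$-cocycle on the $V_{4}$-stabilizers is exactly the hard cohomological computation you defer. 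Until one of these is carried out, the proposition is not proved.
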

\begin{proof}
%  By \lemmaref{Lemma: Last S-matrix Entry} $s_{4,4}=-12$ and $X_{4}=X_{4}^{*}$.
%  Examining the balancing equations for $S_{1,1}=S_{2,2}=1$ and $S_{3,3}=3$ we
%  have $\th_{1}=\th_{2}=\th_{3}=1$.
%%  \begin{equation*}
%%  \left.
%%  \begin{aligned}
%%      0 & = -\theta_{1}^2 + \theta_2  \\
%%      0 & = -\theta_{2}^2 + \theta_1 \\
%%      0 & =-3\theta_3^2 + 2\theta_3 + 1  
%%  \end{aligned}
%%  \right\} \implies \theta_1 = \theta_2 = \theta_3 = 1
%%  \end{equation*} 
%%  \pjbcomment{balancing for $S_{3,3}$ has been corrected. Conclusion was still
%%  valid.}
%%
%  Thus the $S$- and $T$-matrices are determined by the dimension, $d$, and
%  twist, $\th$, of $X_{4}$. Applying equations \ref{balancing} and
%  \ref{dimension} to $S_{4,4}$ and $d^{2}$ allows us to invoke \lemmaref{Lemma:
%  Lemma: Cyclotomic Remark} to conclude that the order of $\th$, $n$, is in
%  $\lcb 1,2,3,4,5,6,8,10,12\rcb$. The only choices consistent with equations
%  \eqref{balancing} and \eqref{dimension} are $n=4$ or $12$. In both cases we have
%  $d=2\sqrt{3}$.
  The only choice for $n\in\lcb 1,2,3,4,5,6,8,10,12\rcb$ consistent with equations
  \eqref{balancing} and \eqref{dimension} are $n=4$ or $12$. In both cases we have
  $d=2\sqrt{3}$. Next we observe that $\mbbZ_{3}\subset\mfA_{4}$ and so $\mcC_{\mfA_{4}}$ is a
  premodular category with $\mbbZ_{3}$ action. Letting $D$ denote a transversal
  of the orbits of the simple object isomorphism classes under this action we
  can conclude that the objects in $\mcC$ are pairs $\paren{y,V}$ where $y\in D$
  and $V$ is a simple projective representation of $\Stab_{\mbbZ_{3}}\paren{y}$.
  Since $y=\mbbI$ is fixed by the $\mbbZ_{3}$-action we have can deduce that
  three of the simples are $\paren{\mbbI,\chi_{a}}$ where the $\chi_{a}$ are the
  irreducible characters of $\mbbZ_{3}$. The remaining two objects we denote by
  $\paren{y,V}$ and $\paren{z,W}$. Since $\mbbZ_{3}$ has no proper subgroups it
  must be that $y\neq z$ and
  $\Stab_{\mbbZ_{3}}\paren{y}=\Stab_{\mbbZ_{3}}\paren{z}=1$. Applying the
  orbit-stabilizer theorem we know that $\abs{\mbbZ_{3}.y}=\abs{\mbbZ_{3}.z}=3$.
  Thus $y$ has dimension $2/\sqrt{3}\notin\bar{\mbbZ}$, an impossibility.
\end{proof}

%%%%%%%%%%%%%%%%%%%%%%%%%%%%%%%%%%%%%%%%%%%%%%%%%%%%%%%%%%%%%%%%%%%%%%%%%%%%%%%%
% Classification of S and T for Muger Center of Rank 3
%%%%%%%%%%%%%%%%%%%%%%%%%%%%%%%%%%%%%%%%%%%%%%%%%%%%%%%%%%%%%%%%%%%%%%%%%%%%%%%%

\subsection{Rank 3 M\"{u}ger Center}
\begin{prop}
  \label{Prop: Rank 3 Muger Center}
	If $\mcC$ is a rank 5 properly premodular category with a rank $3$ M\"uger
  center, then $\cl C$ is Grothendieck equivalent to $\Rep\paren{\mfS_4}$,
  $\mcC'\cong\Rep\paren{\mfS_{3}}$, and $T=\diag\paren{1,1,1,-1,-1}$.
\end{prop}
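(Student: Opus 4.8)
The plan is to first pin down $\mcC'$ and then reconstruct $\mcC$ from its modularization. Since $\mcC$ has odd rank, \lemmaref{Lemma: Tannakian} shows $\mcC'$ is Tannakian, so by \thmref{Theorem: Deligne Symmetric Classification} we have $\mcC'\cong\Rep(G)$ with exactly three irreducible representations. The bound of \cite[Lemma 4.4(ii)]{BR1} then leaves only $G\cong\mbbZ_3$ (dimensions $1,1,1$) or $G\cong\mfS_3$ (dimensions $1,1,2$), and, $\mcC'$ being Tannakian, its twists are all $+1$. The target conclusion singles out the $\mfS_3$ case, so the work is to realize that case and to exclude $\mbbZ_3$.

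Next I would de-equivariantize by $G$, producing a modular category $\mcD=\mcC_G$ with $\dim\mcD=\dim\mcC/|G|$ and $\mcC\cong\mcD^{G}$ \cite{Brug,BN1}. Writing the simples of $\mcC$ as pairs $(y,V)$ with $y$ a $G$-orbit representative in $\Irr(\mcD)$ and $V$ a simple projective $\Stab_G(y)$-representation, the unit orbit contributes exactly the three transparent simples, so the two remaining simples constrain the non-unit orbits severely. For $G\cong\mfS_3$ the only ways to obtain exactly two further simples are a single orbit with stabiliser $\mbbZ_2$—yielding $X_3,X_4$ of equal dimension $3\delta$ and a rank-$4$ modular $\mcD$ whose dimensions lie in $\{1,\delta\}$—or two free orbits, giving $\mathrm{rank}(\mcD)=13$; for $G\cong\mbbZ_3$ one is forced into two free orbits and $\mathrm{rank}(\mcD)=7$.

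In the stabiliser-$\mbbZ_2$ case I would show $\delta=1$ by a direct dimension argument: the three non-unit simples of $\mcD$ all have dimension $\delta$, so $\mcD$ carries only the dimensions $1$ and $\delta$; choosing a self-dual such object $a$, the unit occurs exactly once in $a\otimes a$ but is absent from $a\otimes b$ for $b\neq a$, which forces $\delta\in\mbbZ$ and then $\delta\mid\delta^2-1$, i.e. $\delta=1$. Thus $\mcD$ is pointed of dimension $4$, and since $\mfS_3$ permutes its three non-unit (order-$2$) invertibles transitively, $\mcD$ is the metric group on $\mbbZ_{2}^{2}$ with $\mfS_3=\Aut(\mbbZ_{2}^{2})$; hence $\mcC\cong\mcD^{\mfS_3}\cong\Rep(\mbbZ_{2}^{2}\rtimes\mfS_3)=\Rep(\mfS_4)$, returning $\mcC'\cong\Rep(\mfS_3)$ and dimensions $(1,1,2,3,3)$. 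The twists then follow from \eqref{balancing}: applying it to the pairs built from $X_3,X_4$ with the $\Rep(\mfS_4)$ fusion rules (e.g. $X_3\otimes X_3\cong\mbbI\oplus E\oplus X_3\oplus X_4$, with $E$ the $2$-dimensional central simple) produces the common right-hand side $3+3(\theta_{X_3}+\theta_{X_4})$, while orthogonality of each non-transparent column with the unit column gives $s_{X_3,X_3}+s_{X_3,X_4}=-\dim\mcC'=-6$ and the analogue for $X_4$. Combining these collapses to $(\theta+1)^2=0$ with $\theta_{X_3}=\theta_{X_4}=:\theta$, so $\theta=-1$ and $T=\diag(1,1,1,-1,-1)$.

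The main obstacle is excluding the remaining configurations: the $\mbbZ_3$ centre and the two-free-orbit case for $\mfS_3$. Here $\mcD$ is a modular category of rank $7$ (resp. $13$) whose non-unit simples fall into $G$-orbits of equal dimension, and I would argue, using the algebraic integrality of the categorical dimensions and of the formal codegrees $\dim\mcD/d_i^{2}$ \cite{ENO2} together with the transitivity of the induced action, that $\mcD$ must be pointed, hence a metric group on $\mbbZ_7$ (resp. $\mbbZ_{13}$). But $G$ acts on $\mcD$ by braided autoequivalences, that is by orthogonal automorphisms of this metric group, and $\mathrm{O}(\mbbZ_p,q)=\{\pm1\}$ for an odd prime $p$ contains neither an element of order $3$ nor a copy of $\mfS_3$; this contradiction removes both cases. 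Establishing pointedness in these higher-rank situations—where the clean two-dimension argument no longer applies verbatim and a Galois-theoretic refinement of the integrality constraints is needed—is the step I expect to be the crux.
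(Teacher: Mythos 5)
Your reduction to three configurations and your handling of the stabiliser-$\mbbZ_{2}$ case are correct, and there your route is genuinely different from (and in places cleaner than) the paper's: the observation that absence of the unit in $a\otimes b$ gives $\delta^{2}=\paren{\text{integer}}\cdot\delta$, hence $\delta\in\mbbZ$ and then $\delta\mid 1$, replaces the paper's appeal to the rank-$4$ classification of \cite{RSW1}; and identifying $\mcC\cong\mcD^{\mfS_{3}}$ with $\mcD$ the $3$-fermion theory carrying the full $\Aut\paren{\mbbZ_{2}^{2}}\cong\mfS_{3}$-action (note that a homomorphism $\mfS_{3}\to\mfS_{3}$ with transitive image must be surjective, since $\mfS_{3}$ has no normal subgroup of order $2$) produces the $\Rep\paren{\mfS_{4}}$ fusion rules at once via \cite{BN1}, where the paper grinds them out from dimension equations and commuting fusion matrices. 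Your twist computation $\paren{\theta+1}^{2}=0$ is correct, and your endgame for the excluded cases --- once pointedness is known, $O\paren{\mbbZ_{p},q}=\lcb\pm1\rcb$ admits no transitive $\mbbZ_{3}$- or $\mfS_{3}$-action, contradicting the required free orbits --- is valid; it coincides with the paper's own argument in the rank-$13$ case, and is leaner than the paper's rank-$7$ argument, which instead invokes \cite[Theorem 6.3]{EGO1} to pin the fusion rules of $\mcC$ to those of $\Rep\paren{\mbbZ_{7}\rtimes\mbbZ_{3}}$ and then derives the contradiction $\theta_{3}^{2}+5\theta_{3}+1=0$.

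The genuine gap is exactly the step you flagged and deferred: proving that the rank-$7$ and rank-$13$ de-equivariantizations are pointed. This is the crux of the whole proposition, and ``integrality of dimensions and of the formal codegrees together with transitivity'' does not get you there: those constraints alone do not exclude irrational values in the dimension patterns $\paren{1,a,a,a,b,b,b}$ and $\paren{1,a,\dots,a,b,\dots,b}$, and the Egyptian-fraction analysis only bites once integrality is established. The paper needs substantial input at this point: in the non-integral case, \lemmaref{galoislemma} (which rests on the Galois symmetry of the $S$-matrix, \cite[Theorem 2.7]{RSW1}) forces one of the repeated dimensions to equal $1$; equidimensionality of the universal grading \cite{GN2,DGNO1} then pins the other to $\lcb 1,2\rcb$ (rank $7$) or $\lcb 1,\sqrt{7}\rcb$ (rank $13$); finally \cite[Theorem 5.8]{BGNPRW1} (integral modular categories of rank $7$ are pointed) kills $b=2$, the techniques of \cite{BR1} settle the integral rank-$13$ search, and \cite[Proposition 8.32]{ENO1} kills the dimension-$49$ possibility. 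Without these (or equivalent) ingredients your exclusion of the $\mbbZ_{3}$ centre and of the two-free-orbit $\mfS_{3}$ configuration does not go through, so the proposal is incomplete precisely where the paper does most of its work.
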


We will consider the different possibilities for $\mcC'$ in Propositions \ref{Prop: Muger
Center Z3} and \ref{Prop: Muger Center S3}. However, before doing this we find
it useful to establish the following lemma.
\begin{lemma} \label{galoislemma}
	Suppose $\cl C$ is a modular category with $d_a=d_b\neq 1$ for
	$a\neq b$ and there exists $\sigma\in \Gal\paren{\cl C}$ such that
  $\sigma\paren{a}=0$ , then $d_{\sigma\paren{b}}=\pm 1$. In particular, $\cl C_{pt}
  \ncong \Vec$.
\end{lemma}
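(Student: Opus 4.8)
The plan is to use the Galois symmetry of modular categories together with the structure of the $S$-matrix. Recall that for a modular category there is a Galois action on $\Irr(\mcC)$ coming from the Galois group of the field generated by the $S$-matrix entries, and that each $\sigma \in \Gal(\mcC)$ acts as a signed permutation: there is a permutation $\hat\sigma$ of the simple objects and signs $\e_\sigma(i) = \pm 1$ so that $\sigma(s_{i,j}/s_{0,j}) = s_{i,\hat\sigma(j)}/s_{0,\hat\sigma(j)}$, equivalently $\sigma(s_{i,j}) = \e_\sigma(i)\,s_{\hat\sigma(i),\hat\sigma(j)}$ up to the usual normalization conventions. The hypothesis $\sigma(a) = 0$ should be read as $\hat\sigma(a) = 0$, i.e.\ the simple $X_a$ is sent by the Galois permutation to the unit object $X_0 = \mbbI$.

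First I would extract what $\hat\sigma(a) = 0$ forces on dimensions. Since $d_i = s_{0,i}/s_{0,0}$ and the Galois action permutes the columns (and rows) while introducing only signs, applying $\sigma$ to the ratio $s_{0,a}/s_{0,0} = d_a$ and using $\hat\sigma(a)=0$ gives a relation of the form $d_a = \pm 1/d_0' $-type identity; more precisely the standard consequence is $d_{\hat\sigma^{-1}(0)} = \pm 1/d_a$ combined with $\sigma(d_a) = \pm d_{\hat\sigma(a)} = \pm 1$, so that $\sigma(d_a)=\pm 1$. The point is that a simple sent to the unit must have dimension whose Galois conjugate is $\pm 1$, and since $d_a$ is a totally real algebraic integer with all conjugates of absolute value at most $d_a$ (by pseudo-unitarity the dimensions are the Frobenius--Perron dimensions), the relation pins down the conjugate $d_{\hat\sigma(b)}$.

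Next I would bring in the second simple $X_b$ with $d_b = d_a \neq 1$. Applying the same Galois automorphism $\sigma$ to $d_b$ and tracking the sign/permutation structure, I expect to obtain $\sigma(d_b) = \pm d_{\hat\sigma(b)}$, and then using $d_a = d_b$ together with the identity derived for $d_a$ to conclude that $d_{\hat\sigma(b)}$ satisfies $d_{\hat\sigma(b)} = \pm 1$. Concretely, one compares the action on the $(a,b)$ and $(0,b)$ entries: since $\hat\sigma(a)=0$, the ratio $s_{a,b}/s_{0,b}$ maps under $\sigma$ into a ratio whose numerator involves $s_{0,\hat\sigma(b)}$, and reading off Frobenius--Perron positivity forces the target dimension $d_{\hat\sigma(b)}$ to be $\pm 1$. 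For the final assertion $\mcC_{pt} \ncong \Vec$, I would note that $d_{\hat\sigma(b)} = \pm 1$ exhibits a \emph{nontrivial} invertible (dimension-one) simple object, since $\hat\sigma(b) \neq 0$: indeed $b \neq a$ and $d_b = d_a \neq 1$ prevent $\hat\sigma(b)$ from being $0$ (whose object has dimension $1$ arising trivially) as otherwise $d_b$ would conjugate to the unit in a way incompatible with $\hat\sigma(a)=0$ already occupying that slot. Hence $\mcC$ contains a nontrivial invertible object and $\mcC_{pt} \ncong \Vec$.

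The main obstacle I anticipate is bookkeeping the signs $\e_\sigma$ and the precise normalization of the Galois action on $s_{i,j}$ versus on the normalized ratios $s_{i,j}/s_{0,j}$: it is easy to conflate $\sigma(a)$ (action on a field element) with $\hat\sigma(a)$ (the induced permutation label), and the statement only makes sense under the permutation reading. A secondary subtlety is ruling out the degenerate possibility $\hat\sigma(b) = 0$, which must be excluded using $a \neq b$ and the fact that $\hat\sigma$ is a bijection with $\hat\sigma(a)$ already equal to $0$; this guarantees the invertible object we produce is genuinely nontrivial and gives the clean conclusion $\mcC_{pt} \ncong \Vec$.
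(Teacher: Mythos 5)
Your proposal is correct and takes essentially the same route as the paper: the paper simply invokes the signed-permutation Galois symmetry of modular categories (citing \cite{RSW1}, Theorem 2.7) in the form $d_{\sigma(0)}\sigma(d_{k})=d_{\sigma(k)}\epsilon_{\sigma(k),\sigma}$ and applies it to $\sigma(d_{a})=\sigma(d_{b})$, which is exactly your comparison of the images of $d_a$ and $d_b$ under the signed permutation. Your final bijectivity argument showing $\sigma(b)\neq 0$ (hence a nontrivial invertible object) is the same step the paper leaves implicit.
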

\begin{proof}
  The result follows by applying \cite[Theorem 2.7]{RSW1}, and in particular
  $d_{\s\paren{0}}\s\paren{d_{k}}=d_{\s\paren{k}}\e_{\s\paren{k},\s}$, to
  $\s\paren{d_{a}}=\s\paren{d_{b}}$.
	%By \cite[Theorem 2.7]{RSW1} we know that 
	%\begin{equation*}
	%	\sigma(d_k)=\frac{d_{\sigma(k)}}{d_{\sigma(0)}} \epsilon_{\sigma(k),\sigma}\
  %  \quad \\forall k \in\{0,\dots,\rank(\cl C)-1 \}
	%\end{equation*}
  %	
	%From this equation we get,
	%\begin{equation*}
	%	\frac{d_0}{d_{\sigma(0)}} \epsilon_{\sigma(i),\sigma} = \sigma(d_i)= \sigma(d_j) = \frac{d_{\sigma(j)}}{d_{\sigma(0)}} \epsilon_{\sigma(j),\sigma} \implies \epsilon_{\sigma(i),\sigma}= d_{\sigma(j)} \epsilon_{\sigma(j),\sigma} \implies d_{\sigma(j)}=1
	%\end{equation*}
\end{proof}

Next we apply \cite[Lemma 4.4(ii)]{BR1} to conclude that groups with $3$
irreducible representations must satisfy 
$3 \leq |G| \leq A_3 \Longrightarrow 3 \leq |G|\leq 6$. In particular,
$G\cong\mfS_{3}$ or $\mbbZ_{3}$.
%The first thing we need to do is list all possible symmetric categories with rank $3$. We wish to
%find all groups that have $3$ irreducible representations, and that way conclude
%that $\Rep(G)= \cl C'$ \cite{D1}. We will then use \cite[lemma 4.4 (ii)]{BR1} to relate the
%number of irreducible representations of $G$ to the order of $G$. This gives us the following inequality,
%$$3 \leq |G| \leq A_3 \Longrightarrow 3 \leq |G|\leq 6$$
%
%The only groups that satisfy these constrains are $\mfS_3$ and $\mbbZ_3$. We know show the latter case is not possible.

%=============================================================================
% Z3 Muger Center
%=============================================================================
\begin{prop}
  \label{Prop: Muger Center Z3}
  There is no rank 5 properly premodular category whose M\"{u}ger center is
  Grothendieck equivalent to $\Rep\paren{\mbbZ_{3}}$.
\end{prop}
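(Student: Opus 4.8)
The plan is to show that a rank 5 properly premodular category $\mcC$ with $\mcC'\cong\Rep(\mbbZ_3)$ cannot exist by exploiting the de-equivariantization $\mcC_{\mbbZ_3}$ together with the Galois theory machinery of \lemmaref{galoislemma}. First I would record the structure forced by having a rank 3 M\"uger center: since $\mcC'\cong\Rep(\mbbZ_3)$, three of the simples (the characters of $\mbbZ_3$) have dimension $1$, and by \lemmaref{Lemma: Tannakian} their twists are all $1$. The remaining two simples $X_3,X_4$ lie outside $\mcC'$. Using \lemmaref{Lemma: Last S-matrix Entry} (or its obvious analogue for a rank $3$ center inside a rank $5$ category, applied to the orthogonality of the outside columns against the $0$-th column) together with the dimension equation \eqref{dimension}, I would pin down the possible dimensions $d_3,d_4$ of the outside simples. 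The $\mbbZ_3$-grading coming from the Tannakian center should organize the fusion rules, and I expect the two outside objects to be either fixed or swapped by the residual structure.

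Next I would pass to the de-equivariantization $\mcC_{\mbbZ_3}$, which is modular by \cite{Brug} since the center is Tannakian. As in the proof of \propref{Prop: Muger Center A4}, the simples of $\mcC$ correspond to pairs $\paren{y,V}$ with $y$ ranging over a transversal $D$ of the $\mbbZ_3$-orbits of $\Irr(\mcC_{\mbbZ_3})$ and $V$ a simple projective representation of $\Stab_{\mbbZ_3}(y)$. Since $\mbbZ_3$ is simple and the unit is a fixed point contributing the three one-dimensional characters, the orbit-stabilizer theorem severely constrains how the two remaining simples can arise: either both outside objects come from a single free $\mbbZ_3$-orbit of a single object $y$ in $\mcC_{\mbbZ_3}$, or they sit over fixed points with the nontrivial cocycle. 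I would compute $\dim\mcC_{\mbbZ_3}=\dim\mcC/3$ and the dimensions of the corresponding objects of $\mcC_{\mbbZ_3}$, checking whether these lie in $\bar{\mbbZ}$; a dimension like $d_3/\sqrt{3}$ failing to be an algebraic integer would immediately rule out the free-orbit scenario, exactly as in the $\mfA_4$ argument.

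The case that survives this integrality check is where I expect the real work, and this is where \lemmaref{galoislemma} enters. If the two outside objects descend to objects of equal dimension $d_a=d_b\neq 1$ in the modular category $\mcC_{\mbbZ_3}$, I would apply \lemmaref{galoislemma}: producing a Galois element $\sigma$ sending one of them to the unit forces a pointed object, hence $\paren{\mcC_{\mbbZ_3}}_{pt}\ncong\Vec$. I would then compare this conclusion against the actual pointed part of $\mcC_{\mbbZ_3}$ as dictated by its rank and dimension, deriving a contradiction with the fusion rules or with the Galois action on the $S$-matrix. The main obstacle will be handling the bookkeeping of the equivariantization correspondence carefully enough to guarantee that the two outside simples really do yield a repeated nontrivial dimension in $\mcC_{\mbbZ_3}$, so that \lemmaref{galoislemma} is applicable; once that hypothesis is secured, the contradiction with the available pointed subcategory should close the argument.
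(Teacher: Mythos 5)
Your opening moves match the paper's: de-equivariantize by the Tannakian center, describe the simples of $\mcC$ as pairs $\paren{y,V}$, and use the orbit--stabilizer theorem plus \lemmaref{galoislemma} to constrain $\mcC_{\mbbZ_{3}}$. But two things go wrong. First, your case analysis of how the two outside simples arise is off: a single free $\mbbZ_{3}$-orbit contributes exactly \emph{one} simple to the equivariantization (the trivial stabilizer has one irreducible), and since $H^{2}\paren{\mbbZ_{3},\mbbC^{\times}}=0$ there is no ``nontrivial cocycle'' case at all; a fixed point would contribute three simples, which is too many. The forced conclusion is that there are exactly two distinct free orbits, so $\mcC_{\mbbZ_{3}}$ is a rank $7$ modular category of dimension $1+3a^{2}+3b^{2}$ with $a=d_{3}/3$, $b=d_{4}/3$ (and note $d_{3}/3$, not $d_{3}/\sqrt{3}$, so there is no integrality obstruction here, unlike the $\mfA_{4}$ case). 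Also, \lemmaref{galoislemma} is applied to the three equal dimensions \emph{within} one orbit, not to a coincidence $d_{3}=d_{4}$ across the two orbits; together with the integral case handled by \cite{BGNPRW1} and the equidimensionality of the universal grading, this forces $a=b=1$, i.e., $\mcC_{\mbbZ_{3}}$ is pointed of rank $7$.

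The more serious gap is that your argument has no endgame. Reaching a pointed rank $7$ de-equivariantization is \emph{not} a contradiction --- it is a perfectly consistent state of affairs, and ``comparing against the pointed part'' yields nothing. The paper must continue: $\dim\mcC=21$, so by the $pq$-classification \cite[Theorem 6.3]{EGO1} the category $\mcC$ is Grothendieck equivalent to $\Rep\paren{\mbbZ_{7}\rtimes\mbbZ_{3}}$; then balancing and the symmetry $s_{34}=s_{43}$ give $\th_{3}=\th_{4}$ and determine the last column of $S$, and orthogonality of the first and last columns produces $\th_{3}^{2}+5\th_{3}+1=0$, which has no root-of-unity solution. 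That concrete computation is the actual content of the nonexistence proof; without it (or some substitute), your proposal does not rule the category out.
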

\begin{proof}
  Suppose $\mcC$ is such a properly premodular category. Then $\mcC'$ is
  Tannakian since $\mbbZ_{3}$ has no order 2 central elements. Thus
  $\mcC_{\mbbZ_{3}}$ is modular. Now let $\hat{D}$ be the set of orbits of
  isomorphism classes of simples in $\mcC_{\mbbZ_{3}}$ under the $\mbbZ_{3}$
  action, and let $D$ be a transversal of $\hat{D}$. Then the simples in $\mcC$
  are $\paren{\mbbI,\chi_{a}}$, $\paren{y,V}$, and $\paren{z,W}$, where
  $\chi_{a}$ are the irreducible linear characters of $\mbbZ_{3}$, and $V$ and
  $W$ are simple projective representations of $\Stab_{\mbbZ_{3}}\paren{y}$ and
  $\Stab_{\mbbZ_{3}}\paren{z}$ respectively. Since $\mbbZ_{3}$ has no proper
  subgroups and $H^{2}\paren{\mbbZ_{3},\mbbC^{\times}}=0$ we may take $V$ and
  $W$ to be irreducible linear representations. Of course all irreducibles of
  the stabilizer must appear and so $y\neq z$ and the stabilizers are trivial.
  Applying the Orbit-Stabilizer Theorem, we can conclude that
  $\abs{\mbbZ_{3}.y}=\abs{\mbbZ_{3}.z}=3$ and $\mcC_{\mbbZ_{3}}$ is a rank $7$
  modular category of global dimension $1+3a^{2}+3b^{2}$ where $a=d_{3}/3$ and
  $b=d_{4}/3$. So either $a=b=1$ by \cite{BGNPRW1} or we may apply
  \lemmaref{galoislemma} to conclude that $a=1$. Applying the equidimensionality
  of the universal grading of $\mcC_{\mbbZ_{3}}$, \cite{GN2,DGNO1} we can deduce
  that $b=1$ or $2$. The later case cannot occur as every integral modular
  category of rank 7 is pointed \cite[Theorem 5.8]{BGNPRW1}. Thus the global
  dimension of $\mcC$ is $21$. Applying \cite[Theorem 6.3]{EGO1} we can conclude
  that $\mcC$ is Grothendieck equivalent to
  $\Rep\paren{\mbbZ_{3}\rtimes\mbbZ_{7}}$. Applying these fusion rules,
  balancing, and the symmetry $s_{34}=s_{43}$ allows us to conclude that
  $\th_{3}=\th_{4}$. The last column of $S$ is now determined in terms of
  $\th_{3}$ by balancing and the fusion rules. 
  %\begin{align*}
  %  s_{34}=s_{43}\implies \th_{3}=\th_{4}\\
  %  s_{33}=s_{44}=\paren{3+6\th_{3}}/\th_{3}^{2}\\
  %  s_{34}=9/\th_{3}
  %\end{align*}
%
  Applying orthogonality of the first and last column of the $S$-matrix produces
  the equation $\th_{3}^{2}+5\th_{3}+1=0$. This is not possible as $\th_{3}$ is
  a root of unity.
\end{proof}

\begin{prop}
  \label{Prop: Muger Center S3}
  If $\mcC$ is properly premodular and $\mcC'$ has rank $3$, then
  $\mcC$ is Grothendieck equivalent to $\Rep\paren{\mfS_{4}}$,
  $\mcC'\cong\Rep\paren{\mfS_{3}}$, and $T=\diag\paren{1,1,1,-1,-1}$. Such a
  category can be obtained via the $\mfS_{3}$-equivariantization of a 3-fermion
  theory.
  %Or $\mcC$ is a $\mfS_{3}$-equivariantization of a pointed rank 13 modular
  %category.
\end{prop}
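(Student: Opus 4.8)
The plan is to reduce to the de-equivariantization and to identify the resulting modular category. Since the groups with exactly three irreducible representations are $\mbbZ_3$ and $\mfS_3$, and the former is excluded by \propref{Prop: Muger Center Z3}, while $\mfS_3$ has trivial center and hence $\mcC'\cong\Rep\paren{\mfS_3}$ is Tannakian, I would begin from $\mcC'\cong\Rep\paren{\mfS_3}$ with simple dimensions $\paren{1,1,2}$ and all twists equal to $1$ by \lemmaref{Lemma: Tannakian}. As $\mcC'$ is the full, Tannakian M\"uger center, the minimal modularization $\mcC_{\mfS_3}$ is modular, and $\mcC\cong\paren{\mcC_{\mfS_3}}^{\mfS_3}$.

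Next I would analyze the simples of $\mcC$ through the orbit--stabilizer description of the equivariantization: each simple is a pair $\paren{y,V}$ with $y$ ranging over a transversal of the $\mfS_3$-orbits of $\Irr\paren{\mcC_{\mfS_3}}$ and $V$ a simple projective representation of $\Stab_{\mfS_3}\paren{y}$. Since $H^{2}\paren{\mfS_3,\mbbC^{\times}}=0$ and $\mbbI$ is fixed, the three simples lying over $\mbbI$ are the linear characters of $\mfS_3$, recovering the dimensions $\paren{1,1,2}$ of $\mcC'$. Because $H^{2}$ vanishes on every subgroup of $\mfS_3$ as well, a fixed object $y\neq\mbbI$, or an orbit with stabilizer $\mbbZ_3$, would each contribute three further simples, which is too many; hence the two remaining simples of $\mcC$ must arise either from (A) a single orbit of size $3$ with stabilizer $\mbbZ_2$, contributing the two characters of $\mbbZ_2$, or from (B) two free orbits of size $6$. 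Writing $d_{y}=\dim_{\mcC_{\mfS_3}}\paren{y}$ and using the dimension formula $\dim_{\mcC}\paren{y,V}=[\mfS_3:\Stab\paren{y}]\dim V\, d_{y}$, case (A) produces two dimensions equal to $3d_{y}$, whereas case (B) produces $6d_{y_1}$ and $6d_{y_2}$.

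The hard part will be ruling out case (B) and, inside case (A), forcing $\mcC_{\mfS_3}$ to be pointed. For (B) I would argue that a free orbit forces $\mfA_3\cong\mbbZ_3$ to act with trivial stabilizer; combined with the integrality of the formal codegrees $\dim\mcC_{\mfS_3}/d_{y_i}^{2}$ of the modular category $\mcC_{\mfS_3}$ and the divisibility $\dim\mcC'\mid\dim\mcC$, this is incompatible with the dimension count $\dim\mcC_{\mfS_3}=1+6d_{y_1}^{2}+6d_{y_2}^{2}$ (the simplest subcase, a pointed $\mcC_{\mfS_3}$ on $\mbbZ_{13}$, is immediate since $\Aut\paren{\mbbZ_{13}}$ is abelian and cannot support a free $\mfS_3$-action). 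In case (A) the category $\mcC_{\mfS_3}$ is rank $4$ modular and carries an $\mfS_3$ acting transitively on three equidimensional simples with $\mbbZ_2$ stabilizers; appealing to the rank $4$ modular classification, the only such category is the pointed one on $\mbbZ_2\times\mbbZ_2$, whose unique $\mfS_3$-invariant nondegenerate quadratic form makes all three nontrivial objects fermions, i.e. the $3$-fermion theory, with $d_{y}=1$ and $\th_{y}=-1$.

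Finally I would reconstruct $\mcC$. The three simples over $\mbbI$ have dimensions $\paren{1,1,2}$ and twists $\paren{1,1,1}$, while the two simples over the fermion orbit have dimension $3\cdot 1=3$ and inherit the twist $-1$, giving $d_i=\paren{1,1,2,3,3}$ and $T=\diag\paren{1,1,1,-1,-1}$. Since $\mbbZ_2^{2}\rtimes\mfS_3\cong\mfS_4$ realizes the three fermions as $\mathbb{F}_2^{2}$ acted on by $\mathrm{GL}_2\paren{\mathbb{F}_2}\cong\mfS_3$, this equivariantization is Grothendieck equivalent to $\Rep\paren{\mfS_4}$, and the same construction simultaneously exhibits the required realization as an $\mfS_3$-equivariantization of a $3$-fermion theory.
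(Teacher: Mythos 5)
Your overall strategy --- de-equivariantize at the Tannakian center $\Rep(\mfS_3)$, describe the simples of $\mcC$ via orbits and stabilizers, and split into the case of a single orbit of size $3$ with stabilizer $\mbbZ_2$ versus two free orbits of size $6$ --- is exactly the paper's decomposition (its Case 1: $y=z$ and Case 2: $y\neq z$). Your treatment of case (A) is correct and in fact tidier than the paper's: identifying $\mcC_{\mfS_3}$ as the $\mbbZ_2^2$-pointed category with the unique $\mfS_3$-invariant nondegenerate quadratic form (the $3$-fermion theory), and reading off $d_i=(1,1,2,3,3)$, $T=\diag(1,1,1,-1,-1)$ and the $\Rep(\mbbZ_2^2\rtimes\mfS_3)\cong\Rep(\mfS_4)$ fusion rules directly from the equivariantization, replaces the paper's Diophantine analysis of fusion coefficients and the balancing-equation computation of $\theta_3=\theta_4=-1$, while simultaneously giving the realization statement.

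The genuine gap is in case (B). There $\mcC_{\mfS_3}$ is a rank $13$ modular category with dimensions $1,a,\ldots,a,b,\ldots,b$ (six each), $\dim\mcC_{\mfS_3}=1+6a^2+6b^2$, and you must prove it is \emph{pointed} ($a=b=1$) before your closing observation --- that $\mfS_3$ cannot act with trivial stabilizers because the braided autoequivalence group of a pointed category on $\mbbZ_{13}$ sits inside the abelian group $\Aut(\mbbZ_{13})$ --- can finish the job (that observation itself is fine, and cleaner than the paper's enumeration of the trivial and sign actions giving ranks $39$ and $21$). You claim pointedness follows from integrality of the formal codegrees $\dim\mcC_{\mfS_3}/a^2$, $\dim\mcC_{\mfS_3}/b^2$ together with $\dim\mcC'\mid\dim\mcC$. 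But the divisibility is vacuous here (the ratio is $\dim\mcC_{\mfS_3}$, automatically an algebraic integer), and the codegree condition is far too weak outside the integral case: for instance $a=b=\varphi$ (the golden ratio) gives $\dim\mcC_{\mfS_3}/a^2=(27-\sqrt{5})/2$, a root of $x^2-27x+181$, hence an algebraic integer, so your stated constraints do not exclude it. Even for $a,b\in\mbbZ$ the codegree divisibilities only reduce matters to a finite search, not an immediate contradiction. The paper needs real machinery at precisely this point: in the non-integral case, the Galois symmetry of the $S$-matrix (\lemmaref{galoislemma}) forces $a=1$, equidimensionality of the universal grading \cite{GN2} then forces $b=1$ or $b=\sqrt{7}$, and the latter (global dimension $49$) is excluded by \cite[Proposition 8.32]{ENO1}; in the integral case it invokes an exhaustive search with the techniques of \cite{BR1}. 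As written, your proposal does not rule out case (B), so the proof is incomplete until an argument of this kind is supplied.
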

\begin{proof}
  Just as in the proof of \propref{Prop: Muger Center Z3} we may
  de-equivariantize and understand the simples of $\mcC$ in terms of
  equivariantization. Since $H^{2}\paren{\mfS_{3},\mbbC^{\times}}=0$ we have
  that the simples are $\paren{\mbbI,\r_{a}}$, $\paren{y,V}$, and $\paren{z,W}$
  where $\r_{a}$ are the irreducible representations of $\mfS_{3}$, while $V$
  and $W$ are the irreducible linear representations of
  $\Stab_{\mfS_{3}}\paren{y}$ and $\Stab_{\mfS_{3}}\paren{z}$ respectively.
  There are now two cases to consider:

  \textbf{Case 1: $y=z$} In this case $\Stab_{\mfS_{3}}\paren{y}$ can only have
  two irreducible linear representations and hence is isomorphic to $\mbbZ_{2}$.
  By the Orbit-Stabilizer Theorem we can conclude that $\mcC_{\mfS_{3}}$ has
  rank $4$ and thus is pointed by \cite{RSW1}. 

  By \cite{BN1} we know that the simple objects in $\cl C$ have dimensions $1,
  1, 2, 3, 3$ and the balancing equation easily tell us that
  $\theta_0=\theta_1=\theta_2=1$. In addition, if we de-equivariantize by
  $\Rep(\mbbZ_2)\subset \Rep(\mfS_3)=\cl C'$, we get a premodular category with
  simples with dimensions $1,1,1,3$.  If we apply \cite[Proposition 3.21]{BN1}
  to the object of dimension $3$ we can conclude that $x_3=x_3^*$ and
  $x_4=x_4^*$. In addition, we know that $X_1\otimes X_3=X_4 \implies X_3\otimes
  X_3$ does not contain $X_1$ in its decomposition. If we know look at the
  dimension equation corresponding to $X_3\otimes X_3$ we get the following
  Diophantine equation,
  \begin{equation*}
    9=1+2N_{33}^2+3( N_{33}^3+N_{33}^4) \implies N_{33}^2=1\text{ and }
    N_{33}^3+N_{33}^4=2 \text{ or } N_{33}^2=4\text{ and } N_{33}^3+N_{33}^4=0
  \end{equation*}

  The latter case cannot happen since this would force $4\leq|| N_3 ||_{max}
  \leq d_3=3$ \cite[Lemma 3.14]{BNRW1}.  From here we can finally conclude,
  $N_{23}^3=N_{23}^4=N_{24}^3=N_{24}^4=N_{13}^4=1$ and $N_{33}^3+N_{33}^4 =
  N_{44}^3+N_{44}^4=2$. If we now use the fact that all the $N_i$ matrices
  commute, we can conclude that $N_{33}^4=N_{44}^4$ and $N_{44}^3=N_{33}^3$. If
  we combine the previous mentioned observations and the equation that you get
  by looking at $s_{33}$ we are able to conclude that $\theta_3=\theta_4=-1$.

  Note that if we consider the rank 4 pointed modular categories, $\mcD$, then
  the Grothendieck group is either $\mbbZ_{4}$, or $\mbbZ_{2}^{2}$. In the
  former case there are two $\mfS_{3}$ equivariantizations of rank $9$ and $12$.
  In the later case, if we take $\mcD$ to be equivalent to the toric code, then
  there are again two $\mfS_{3}$ equivariantizations of rank $9$ and $12$. The
  final option is that $\mcD$ is Grothendieck equivalent to the 3-fermion
  theory. In this case $\Aut_{\otimes}^{\br}\paren{\mcD}\cong\mfS_{3}$ and so
  the possible $\mfS_{3}$-actions are given by group homomorphisms,
  $\Hom\paren{\mfS_{3},\mfS_{3}}$. Once again, there are rank $9$ and $12$,
  equivariantizations. However, the identity automorphism yields an
  $\mfS_{3}$-action on $\mcD$ with 2-orbits. The stabilizer of the nontrivial
  orbit is $\mbbZ_{3}$ and so the $\mfS_{3}$-equivariantization under such an
  action would have rank 5 and have simples of dimension $1,1,2,3,3$. Thus
  $\mcC$ can be realized as an $\mfS_{3}$-equivariantization of the 3-fermion
  theory.
\hide{
  Note that if we consider the rank 4 pointed modular categories, $\mcD$, then
  the Grothendieck group is either $\mbbZ_{4}$, or $\mbbZ_{2}^{2}$. In the
  former case, $\Aut_{\otimes}^{\br}\paren{\mcD}\cong
  O\paren{\mbbZ_{4},q}\subset\mbbZ_{2}$ and so an action of $\mfS_{3}$ is given
  by an element f
  $\Hom\paren{\mfS_{3},O\paren{\mbbZ_{4},q}}\subset\Hom\paren{\mfS_{3},\mbbZ_{2}}\cong\mbbZ_{2}$.
  The two possible actions yield rank $12$ and $9$ equivariantizations. When we
  consider the remaining Grothendieck group, $\mbbZ_{2}^{2}$, there are two
  possibilities. Either $\mcD$ is Galois conjugate to the Toric code, or to the
  3-fermion theory \cite{RSW}. In the former case
  $\Aut_{\otimes}^{\br}\paren{\mcD}\cong\mbbZ_{2}$ \cite{GalindoCommunication}.
  Once again, $\mfS_{3}$-equivariantizations give rank $9$ or $12$ according to
  the action chosen. Finally, we must consider $\mcD$ Galois conjugate to the
  3-fermion theory. In this case
  $\Aut_{\otimes}^{\br}\paren{\mcD}\cong\mfS_{3}$. So there are several possible
  actions of $\mfS_{3}$. The trivial action gives a rank $12$ equivariantization
  as above, the sign action yields a rank $9$ equivariantization as above. If we
  denote the simple objects of the 3-fermion theory by $1$, $g$, $h$, and $gh$,
  then the remaining action yields two orbits $\lcb 1\rcb$ and $\lcb
  g,h,gh\rcb$. The stabilizer of the later orbit is $\mbbZ_{2}$ which has two
  simple projective representations, furthermore the index of $\mbbZ_{2}$ in
  $\mfS_{3}$ is $3$ and so under such an action, $\mcD^{\mfS_{3}}$ is rank 5
  with simples of dimension $1,1,2,3,3$.
}

  \textbf{Case 2: $y\neq z$} Since all irreducibles must appear we have that the
  stabilizers of $y$ and $z$ in $\mfS_{3}$ are trivial. Applying the
  Orbit-Stabilizer Theorem, we can conclude that $\mcC_{\mfS_{3}}$ is a rank 13
  modular category of dimension $1+6a^{2}+6b^{2}$ where $a=d_{3}/6$ and
  $b=d_{4}/6$. In the case that $\mcC_{\mfS_{3}}$ is integral we may apply the
  techniques of \cite{BR1} and by exhaustive search conclude that $a=b=1$. On
  the other hand, if $\mcC_{\mfS_{3}}$ is not integral in which case we may
  apply \lemmaref{galoislemma} to conclude that $a=1$. Invoking the universal
  grading of \cite{GN2} we have $b=1$ or $b=\sqrt{7}$. In the later case
  $\mcC_{\mfS_{3}}$ has dimension $49$ contradicting \cite[Proposition
  8.32]{ENO1}. Thus $\mcC_{\mfS_{3}}$ is pointed.

  To see that this is not possible, let $\mcD$ be a rank 13 pointed modular
  category. Of course, the orthogonal group on $\mbbZ_{13}$, with quadratic form
  $q$ coming from the twist on $\mcD$, $O\paren{\mbbZ_{13},q}$, is either
  trivial or isomorphic to $\mbbZ_{2}$. So the actions of $\mfS_{3}$ on $\mcD$
  are given by elements of $\Hom\paren{\mfS_{3},O\paren{\mbbZ_{13},q}}$, which
  are either trivial or the sign action. In the case of the trivial action, the
  isomorphism classes of simple objects in $\mcD$ are fixed under the action of
  $\mfS_{3}$.  In particular, the de-equivariantization has rank $39$. In the
  case of the sign action, there are 7 orbits of simples under the $\mfS_{3}$
  action and hence the de-equivariantization has rank $21$. Thus, $\mcC$ is not
  the $\mfS_{3}$-equivariantization of a rank 13 pointed modular category.

\hide{
  By \cite{BN1} we can conclude
  that the simples in $\mcC$ have dimensions $1,1,2,6,6$. 
  Applying the balancing
  equation reveals that $\th_{0}=\th_{1}=\th_{2}=1$. 
  \cmocomment{fixed until this point}
  In addition, if we apply \cite[Proposition 3.12]{BN1} we can conclude that
  $x_3=x_3^*$ and $x_4=x_4^*$. By \cite[Corollary 3.11]{BN1} \pjbcomment{How were
  you able to use this Theorem? We generally don't have enough information and
  corollary 3.11 is more effective.} and the balancing
  equations we can conclude that $\theta_3=\theta_4$,
  $N_{23}^3=N_{23}^4=N_{24}^3=N_{24}^4=N_{13}^4=1$, $N_{33}^3+N_{33}^4=2$, and
  $N_{44}^3+N_{44}^4=2$. If we now use the fact that all the $N_i$ matrices
  commute, we can conclude that $N_{33}^4=N_{44}^4$ and $N_{44}^3=N_{33}^3$. If
  we combining these results with \eqref{balancing} applied to $s_{33}$ gives
  $\th_{3}=\th_{4}=-1$.
}
\end{proof}
\subsection{Rank 2 M\"{u}ger Center}
\begin{prop}
  \label{Prop: Rank 2 Muger Center}
	If $\mcC$ is a rank 5 properly premodular category with a rank 2 M\"uger
  center, then one of the following is true:
  \begin{itemize}
    \item[(i)]  $\mcC$ is Grothendieck equivalent to $\Rep(D_8)$ with
    $C'\cong\Rep(Z_2)$. Furthermore, the $T$-matrix is of the form
    $T=\diag\paren{1,1,\th,-1,-1}$ for some root of unity $\th$, corresponding
    to that $2$-dimensional simple object, satisfying a monic quintic polynomial
    over $\mbbZ$, or
    \item[(ii)] $\mcC$ is Grothendieck equivalent to $\Rep\paren{D_{14}}$.
    \item[(iii)] $\mcC$ is Grothendieck equivalent to $PSU(2)_8$.
  \end{itemize}
  Moreover, all cases are realized. The first case by a
  $\mbbZ_{2}$-equivariantization of the toric code, the second by the adjoint
  subcategory of a $56$-dimensional metaplectic category, \textit{e.g.},
  $\paren{\SO\paren{14}_{2}}_{ad}$, and the final case by a $\bb{Z}_2$-equivariantization of $Fib\boxtimes Fib$.
\end{prop}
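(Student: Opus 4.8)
The plan is to run the de-equivariantization machinery exactly as in the rank $3$ and rank $4$ M\"uger center cases. Since $\mcC$ has odd rank, \lemmaref{Lemma: Tannakian} shows that $\mcC'$ is Tannakian, and because the only group with exactly two irreducible representations is $\mbbZ_2$, \thmref{Theorem: Deligne Symmetric Classification} forces $\mcC'\cong\Rep\paren{\mbbZ_2}$. I would then de-equivariantize to obtain a modular category $\mcC_{\mbbZ_2}$ with $\dim\mcC_{\mbbZ_2}=\tfrac12\dim\mcC$ and, as before, describe the simples of $\mcC$ as pairs $\paren{y,V}$, where $y$ runs over a transversal of the $\mbbZ_2$-orbits of $\Irr\paren{\mcC_{\mbbZ_2}}$ and $V$ over the simple projective representations of $\Stab_{\mbbZ_2}\paren{y}$; since $H^2\paren{\mbbZ_2,\mbbC^\times}=0$ these are honest linear characters.

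The first real step is an orbit count. The unit is fixed and contributes the two objects of $\mcC'$. Writing $f$ for the number of fixed orbits and $r$ for the number of free orbits, each fixed orbit contributes two simples and each free orbit one, so $2f+r=5$ with $f\geq1$. The only solutions are $\paren{f,r}=\paren{2,1}$, in which case $\mcC_{\mbbZ_2}$ has rank $4$, and $\paren{f,r}=\paren{1,3}$, in which case $\mcC_{\mbbZ_2}$ has rank $7$. I will treat these two regimes separately, in each first pinning down $\mcC_{\mbbZ_2}$ from the known low-rank classifications and then reconstructing the premodular datum of $\mcC$ from the equivariantization.

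In the rank $4$ regime I would invoke the Rowell--Stong--Wang classification. The governing constraint is that the element of $\Aut_\otimes^{\br}$ realizing the $\mbbZ_2$-action must preserve twists, so the two simples comprising the free orbit are forced to carry equal twist. This immediately obstructs any such action on the categories with dimension multiset $\lcb 1,1,\tfrac{1+\sqrt5}{2},\tfrac{1+\sqrt5}{2}\rcb$ (e.g.\ $SU(2)_3$ or $\Fib\boxtimes$(semion)), whose two equidimensional nonunit simples carry distinct twists and whose only candidate free orbit is that pair, leaving only the pointed categories and $\Fib\boxtimes\Fib$. The pointed case yields the dimension vector $\paren{1,1,2,1,1}$, and here the nonunit fixed object is always a fermion; applying \eqref{balancing} to the diagonal $S$-entries fixes the unit-component and order-two twists, giving $T=\diag\paren{1,1,\th,-1,-1}$, while orthogonality of the first and last columns of the $S$-matrix produces a monic quintic over $\mbbZ$ satisfied by the twist $\th$ of the $2$-dimensional object; this is case (i), with Grothendieck class $\Rep\paren{D_8}$. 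The $\Fib\boxtimes\Fib$ case produces the dimension vector $\paren{1,1,\tfrac{3+\sqrt5}{2},\tfrac{3+\sqrt5}{2},1+\sqrt5}$, which I would identify with $PSU\paren{2}_8$, giving case (iii).

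Finally, in the rank $7$ regime the $\mbbZ_2$-action fixes only the unit and pairs the remaining six simples into three free orbits, so $\mcC_{\mbbZ_2}$ has three equidimensional pairs, and I must show it is pointed. When $\mcC_{\mbbZ_2}$ is integral this is immediate, since integral rank $7$ modular categories are pointed \cite[Theorem 5.8]{BGNPRW1}; the non-integral possibility I would eliminate using \lemmaref{galoislemma} applied to the equidimensional pairs together with the dimension bounds coming from the universal grading, exactly as in \propref{Prop: Muger Center Z3} and \propref{Prop: Muger Center S3}. A pointed rank $7$ modular category is $\mbbZ_7$ with the inversion action, whose equivariantization has dimension vector $\paren{1,1,2,2,2}$ and Grothendieck class $\Rep\paren{D_{14}}$, giving case (ii). The realizations are then exhibited directly: the toric code under electric--magnetic duality for (i), the adjoint subcategory $\paren{\SO\paren{14}_2}_{ad}$ for (ii), and $\Fib\boxtimes\Fib$ under the factor swap for (iii). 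I expect the main obstacle to be the rank $7$ regime---ruling out a non-integral $\mcC_{\mbbZ_2}$ without a complete rank $7$ classification---and, secondarily, extracting the precise monic quintic satisfied by $\th$ in the $\Rep\paren{D_8}$ case.
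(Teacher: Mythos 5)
Your overall strategy coincides with the paper's: reduce to $\mcC'\cong\Rep\paren{\mbbZ_2}$ Tannakian, de-equivariantize, and split according to whether $\mcC_{\mbbZ_2}$ has rank $4$ or rank $7$ (your orbit count $2f+r=5$ is equivalent to the paper's case split on the fusion action of $X_1$). Two of your sub-arguments differ from the paper's but are legitimate: you rule out the $\lcb 1,1,\varphi,\varphi\rcb$ rank-$4$ categories by noting the candidate free orbit carries distinct twists (the paper instead forms the would-be equivariantization, of dimensions $\lcb 1,1,1,1,2\varphi\rcb$, and kills it with Siehler's braided near-group classification), and you identify the rank-$7$ case directly as $\mbbZ_7$ with the inversion action (the paper reconstructs the fusion rules upstairs and invokes Naidu--Rowell). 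These variants are fine, and the twist-preservation argument is arguably cleaner.

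There is, however, one step that fails: your claim that orthogonality of $S$-matrix columns produces the monic polynomial satisfied by $\th=\th_2$ in the $\Rep\paren{D_8}$ case. Once $\th_3=\th_4=-1$ is known, the balancing equation shows that the entire column of $S$ attached to $X_2$ is $\paren{2,2,0,-2,-2}^{T}$ --- the factor $\th_2^{-1}$ from balancing cancels against the $\th_2$ appearing in the unique fusion channel $N_{a^{*}2}^{2}$ --- so the $S$-matrix is completely independent of $\th_2$ and every orthogonality relation holds identically. No constraint on $\th_2$ can be extracted this way. The paper instead obtains the polynomial from the higher Galois/Frobenius--Schur condition of \cite[Corollary 3.3]{B1}, namely $\th^{2}+\th^{-2}=\frac{1}{D^{2}}\sum_{b,c}N_{b,c}^{2}d_{b}d_{c}\paren{\th_{b}/\th_{c}}^{2}\in\mbbZ$, and even then remarks that this condition does not pin $\th$ down. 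Separately, your assertions that the nonunit fixed object downstairs ``is always a fermion'' and that the pointed case has Grothendieck class $\Rep\paren{D_8}$ are true but would need justification (the paper derives $\th_3=\th_4=-1$ from balancing, reality of the self-dual column, and orthogonality of columns $0$ and $3$, and pins the fusion ring via the braided near-group constraint $X_3^{*}=X_4$ together with \cite[Theorem 4.2]{NR1}).
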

\begin{proof}
  Applying \cite[Lemma 4.4(ii)]{BR1} we see that if a group $G$ has only two
  irreducible representations, then $G\cong\mbbZ_{2}$. Thus $\mcC'$ is
  Grothendieck equivalent to $\Rep\paren{\mbbZ_{2}}$. Since $\mcC$ has odd rank
  we can further conclude that $\mcC'$ is Tannakian. Ordering the simples so
  that $X_{0}=\mbbI$ and $X_{1}$ generates $\mcC'$ we have two cases to
  consider.

  \textbf{Case 1:} $X_{1}\otimes X_{3}=X_{4}$ and $X_{1}\otimes X_{2}=X_{2}$.
    It follows immediately from these fusion rules that $X_{2}$ is self-dual.
    Next note that the de-equivariantization, $\mcC_{\mbbZ_{2}}$ is a modular
    category of rank $4$ and two of the objects have the same dimension. By
    \cite{RSW1} we can conclude that $\mcC_{\mbbZ_{2}}$ is pointed, $Fib\boxtimes Fib$, or $Fib \boxtimes Sem$. 

In the pointed case, we get $d_{2}=2$ and
    $d_{3}=d_{4}=1$. Thus $X_{2}\otimes X_{2}=\mbbI\oplus X_{1}\oplus
    X_{1}\oplus X_{2}$ or $\mbbI\oplus X_{1}\oplus X_{3}\oplus X_{4}$. In the
    former case, $X_{2}$ would generate a $6$-dimensional fusion subcategory.
    This is not possible as $6\ndiv 8$. Thus $X_{2}$ generates $\mcC$. Next note
    that this category is necessarily near group and hence can only be braided
    if $X_{3}^{*}=X_{4}$ \cite{S1}, see also \cite[Remark 4.4]{NR1}.
    It follows from \cite[Theorem 4.2]{NR1} that $\mcC$ is Grothendieck
    equivalent to $\Rep\paren{D_{8}}$. Examining the balancing equation for
    $S_{1,3}$ we see that $\th_{3}=\th_{4}$. Since $X_{2}$ is self-dual we can
    conclude that column 2 in $S$ is real. In particular, $\th_{3}=\pm1$.
    Since $X_{3}$ is not in the M\"{u}ger center of $\mcC$ we know that the
    $3$-rd and $0$-th column of $S$ must be orthogonal, hence
    $\th_{3}=\th_{4}=-1$. By \cite[Corollary 3.3]{B1} we know that
    \begin{align}
      \label{equation: theta condition}
      \th^{2}+\th^{-2}=\frac{1}{D^{2}}\sum_{b,c}N_{b,c}^{2}d_{b}d_{c}\paren{\frac{\th_{b}}{\th_{c}}}^{2}\in\mbbZ
    \end{align}

    This yields a quintic polynomial for $\th$ over $\mbbZ$.  Considering all
    possible $n$ such that $\phi\paren{n}<4$ gives possible primitive roots.
    However, such $\th=\zeta_{n}$ with $\phi\paren{n}<4$ satisfy \eqref{equation: theta condition}. Such a category can be constructed by considering the equivariantization of the toric code under the nontrivial $\mbbZ_{2}$-action. 

    In the case where $\mcC_{\mbbZ_{2}}=Fib \boxtimes Fib$, we get the following set of dimensions $\{1,1, \varphi^2, \varphi^2, 2\varphi \}$ for $\cl C$, where $\varphi$ is the golden mean. After computing the fusion rules using \eqref{balancing}, we can conclude that $\cl C$ is Grothendieck equivalent $PSU(2)_8$.

    Finally, if $\mcC_{\mbbZ_{2}}=Fib \boxtimes Sem$, we get a category $\cl C$ with dimensions $\{1,1,1,1,2\varphi\}$. This category violates the condition given in \cite[Theorem 1.1]{S1} for near-group categories, hence it does not exist.

  \textbf{Case 2:} $X_{1}\otimes X_{a}=X_{a}$ for $2\leq a\leq 4$
    In this case the de-equivariantization is rank 7 modular with simples of
    dimension
    $1,\hat{d_{1}},\hat{d_{1}},\hat{d_{2}},\hat{d_{2}},\hat{d_{3}},\hat{d_{3}}$.
    In particular, by \lemmaref{galoislemma} and \cite{BGNPRW1} we know
    that $\paren{\mcC_{\mbbZ_{2}}}_{pt}$ is either $\Rep\paren{\mbbZ_{3}}$ or
    $\mcC_{\mbbZ_{2}}$. In the former case, applying the universal grading and
    the pigeon-hole principle gives $\hat{d}_2^2+\hat{d}_3^2=3$. From here
    notice that the global dimension of this category is $9$ and by
    \cite[Proposition 8.32]{ENO1} we can conclude that
    $\paren{\mcC_{\mbbZ_{2}}}_{pt}=\mcC_{\mbbZ_{2}}\ncong\Rep\paren{\mbbZ_{3}}$,
    a contradiction.

    Thus $\mcC_{\mbbZ_{2}}$ is pointed and hence $d_{1}=d_{2}=d_{3}=2$.
    By the pigeon-hole principle and without loss of generality, we
    may assume $X_{2}$ is self-dual. Just as in case 1 a dimension argument
    shows that $X_{2}$ cannot be a subobject of $X_{2}\otimes X_{2}$. So without
    loss of generality we have $X_{2}\otimes X_{2}=\mbbI\oplus X_{1}\oplus
    X_{3}$. Of course, the category generated  by $X_{2}$ cannot exclude $X_{4}$
    since its dimension must divide $14$. Thus $\mcC$ is cyclically generated by
    $X_{2}$. Furthermore, since $X_{2}$ is self-dual, its tensor square must be
    as well. In particular, $X_{3}$ is self-dual. The pigeon-hole principle
    reveals that $X_{4}$ must also be self-dual. Thus $\mcC$ is Grothendieck
    equivalent to $\Rep\paren{D_{14}}$ by \cite[Theorem 4.2]{NR1}.\footnote{One
    can arrive at the same conclusion by consider equivariantizations of the
    pointed modular categories considered in \cite{RSW1} as in the proof of
    \propref{Prop: Muger Center S3}.} Such categories may be realized as the
    adjoint categories of 56-dimensional metaplectic categories \cite{BPR1}.
\end{proof}

\begin{rmk}
  It is interesting to note that none of the known algebraic conditions,
  \cite{B1}, for premodular categories are sufficient to determine $\th$ in
  \propref{Prop: Rank 2 Muger Center}. Of course, there are finitely many
  modularizable premodular categories and so there must be finitely many such
  $\th$ \cite{BNRW1}.
\end{rmk}

\end{document}